\newcommand{\ZZ}{\mathbb Z}
\newcommand{\NN}{\mathbb N}
\newcommand{\C}{\mathcal C}
\newcommand{\QQ}{\mathbb Q}
\newtheorem{lemma}{Lemma}[section]
\newtheorem{corollary}[lemma]{Corollary}
\newtheorem{theorem}[lemma]{Theorem}
\newtheorem{proposition}[lemma]{Proposition}
\newtheorem{definition}[lemma]{Definition}
\newtheorem{remark}[lemma]{Remark}
\newtheorem{example}[lemma]{Example}
\newtheorem{question}[lemma]{Question}
\newtheorem{notation}[lemma]{Notation}
\begin{document}

\title[Circuit binomials of weighted oriented graphs]{On circuit binomials of toric ideals of weighted oriented graphs}

\author[Ramakrishna Nanduri]{Ramakrishna Nanduri$^*$}
\address{Department of Mathematics, Indian Institute of Technology
Kharagpur, West Bengal, INDIA - 721302.}
\email{nanduri@maths.iitkgp.ac.in}
\author[Tapas Kumar Roy]{Tapas Kumar Roy$^{\dag}$}
\address{Department of Mathematics, Indian Institute of Technology
Kharagpur, West Bengal, INDIA - 721302.}
\email{tapasroy147@kgpian.iitkgp.ac.in}
\thanks{$^\dag$ Supported by PMRF research fellowship, India.}
\thanks{$^*$ Supported by SERB grant No: CRG/2021/000465, India}
\thanks{{\bf AMS Classification 2020:} 13F65, 13A70, 05E40, 05C50, 05C38. }

\begin{abstract} 
In this work, we classify the circuit binomials of any weighted oriented graph $D$ and we explicitly compute the circuit binomials of $D$ in terms of the minors of the incidence matrix of $D$. We show that the circuit binomials of any weighted oriented graph $D$ are the primitive binomials corresponding to one of the classes: (i) a balanced cycle, (ii) two unbalanced cycles sharing a vertex, (iii) two unbalanced cycles connected by a path, (iv) two unbalanced cycles sharing a path. We explicitly prove a formula for the primitive binomial generator of the toric ideal $I_D$ in terms of the minors of the incidence matrix of $D$, where $D$ is as in (i), (ii), (iii) and (iv). Thus we explicitly compute all the circuit binomials $\C_D$ of any weighted oriented graph $D$. If $D$ is a weighted oriented graph which has at most two unbalanced cycles such that no two balanced cycles share a path in $D$ and no balanced cycle in $D$ shares an edge with the path which connects the two unbalanced cycles in $D$ if it exists, then we show that $I_D$ is a strongly robust circuit ideal and it has complete intersection initial ideal. For this class of ideals, we explicitly compute the Betti numbers.  
\end{abstract}
\maketitle

\section{Introduction} \label{sec1}

Toric ideals are important in modern theory of algebra because of their variety of applications in diverse research fields such as Commutative Algebra, Combinatorics, Algebraic Geometry, Integer Programming, Semigroup Rings, Combinatorial Optimization, Coding Theory and Algebraic  Statistics etc, see \cite{ds, v01, chks06, cdss, cls} and the references therein. These ideals are a special type of ideals in a polynomial ring known as binomial ideals. 

Let $R=K[x_1,\ldots,x_n]$, where $K$ is a field. Let $A=[{\bf a_1} \ldots {\bf a_m}]$ be an $ n\times m$ matrix of non-negative integers with columns ${\bf a_1}, \ldots, {\bf a_m}$. Let $S=K[e_1, \ldots ,e_m]$ be the polynomial ring in the variables $e_1,\ldots,e_m$. Then define a $K$-algebra homomorphism $\phi$ : $K[e_1, \ldots, e_m]\rightarrow K[x_1, \ldots , x_n]$, as $\phi(e_i) =\textbf{x}^{\textbf{a}_{i}}$. Then the kernel of $\phi$ is called the {\it toric} ideal of $A$ (or the toric ideal of the monomial ideal $({\bf x}^{\bf a_1},\ldots,{\bf x}^{\bf a_m})$), and we denote by $I_{A}$. Note that $I_A$ is a binomial ideal in $S$. A binomial ${\bf e^u}-{\bf e^v}$ in $I_A$ is called a {\it primitive} binomial if there exists no other binomial ${\bf e^{u^{\prime}}}-{\bf e^{v^{\prime}}}$ in $I_A$ such that ${\bf e^{u^{\prime}}}\vert {\bf e^u}$ and ${\bf e^{v^{\prime}}} \vert {\bf e^v}$. The set of primitive binomials in $I_A$ is called the {\it Graver basis} of $I_A$ and denoted by $Gr_A$. Note that $I_A$ is generated by $Gr_A$. A binomial ${\bf e^u}-{\bf e^v}$ in $I_A$ is called a {\it circuit} of $I_A$ if it has minimal support with respect to set inclusion. The set of circuits of $I_A$ is denoted by $\mathcal{C}_A$. We call a toric ideal $I_A$ is a circuit ideal of it is generated by $\C_A$.  It is well known that $\mathcal{C}_A \subseteq  Gr_A$. 

Toric ideals of monomial ideals are of general interest of many researchers. Toric ideals of square-free monomial ideals are studied by using the notion of hypergraphs, see \cite{v01, ps14, nn22}. In particular, the toric ideals of quadratic square-free monomial ideals (i.e, edge ideals of finite simple graphs) are well studied. For a simple graph $G$, the toric ideal of the edge ideal of $G$, is generated by the binomials corresponding to primitive even closed walks in $G$, see \cite{oh99, v17} and these binomials are square-free. It is a very hard problem to find the generators of toric ideals of non square-free monomial ideals. In the literature, very few results are known for the non square-free monomial ideals case. Recently in \cite{bklo}, Biermann, Kara, Lin and O'Keefe studied the toric ideals of non-square-free monomial ideals arise out of weighted oriented graphs and they characterized principal toric ideals of weighted oriented graphs. However, not much is known about the binomial generators, in particular the primitive binomials, of toric ideals of edge ideals of weighted oriented graphs. The toric ideal of the edge ideal of a weighted oriented graph D, we simply called as the toric ideal of the weighted oriented graph D and we denote by $I_D$. The weighted oriented graphs are of independent interest to study because of their applications in various fields, viz, Coding Theory, Combinatorics, Commutative Algebra, Algebraic Geometry etc, see \cite{v01, mpv17}. Unlike the case of simple graphs, the toric ideals of weighted oriented graphs need not be generated by square-free binomials. Therefore the problem is two-fold, to find the supports as well as exponents of the primitive binomials of the toric ideals of weighted oriented graphs. In this work, we study the circuit binomials of any weighted oriented graph $D$, and we explicitly compute the supports and exponents of circuit binomials of $I_D$ in terms of the minors of the incidence matrix of $D$. We show that the circuits of $D$ are precisely the binomials corresponding to four types of graphs: a balanced cycle, or two unbalanced cycles share a vertex, or two unbalanced cycles connected by a path, or two unbalanced cycles share a path (see Theorem \ref{sec3pro1}).

A vertex weighted oriented graph (with no loop edges and no parallel edges), we simply call as weighted oriented graph $D$ is a triplet $(V(D),E(D), {\bf w})$, where $V(D)$ is the vertex set of $D$, $E(D)$ is the edge set of $D$ with a orientation to each edge and ${\bf w}$ is a weight function which assign a weight to each vertex of $D$. Then the toric ideal of $D$ is the toric ideal of the incidence matrix $A(D)$, of $D$ (or the toric ideal of the edge ideal of $D$) and  we denote by $I_{D}$. Let $Gr_{D},\mathcal{C}_{D}$ denote Graver basis of $I_{D}$ and the set of circuit binomials of $I_{D}$ respectively. Let $D$ be a weighted oriented graph and $D$ has at most two unbalanced cycles such that (i) if $D$ has exactly two unbalanced cycles connected by a path $P$, then no other balanced cycle in $D$ shares an edge with the path $P$,  and  (ii) no two balanced cycles in $D$ share a path. Then we show that $Gr_{D}=\mathcal{C}_{D}$ and $I_{D}$ is strongly robust, that is, $Gr_D$ is a minimal generating set of $I_D$ (Theorem \ref{sec3thm1}). We prove that for any weighted oriented graph $D$, the generators of $I_{D}$ are independent of weights of sink vertices, that is, if $D^{\prime}$ is the graph obtained from $D$ by replacing all weights of sink vertices by $1$, then $I_{D}=I_{D^{\prime}}$ (Proposition \ref{sec3pro2}). 
For any $m\times n$ matrix $A$, we denote $M_k(A[i_1,\ldots,i_{m-k}|j_1,\ldots,j_{n-k}])$, the $k^{th}$ minor of $A$ by deleting the rows $i_1,\ldots,i_{m-k}$ and deleting columns $j_1,\ldots,j_{n-k}$ from $A$.
For any balanced cycle $\C_{2n}$, we show that $I_{\C_{2n}}$ is generated by the primitive binomial corresponding to the vector 
$$\frac{1}{d}\Bigg ((-1)^{i+1}M_{2n-1}(A(\C_{2n})[1|i]) \bigg)_{i=1}^{2n} \in \ZZ^{2n},$$ where $d$ is the gcd of all these minors $M_{2n-1}(A(\C_{2n})[1|i])$'s (Theorem \ref{sec4thm1}). Let $D$ be the weighted oriented graph comprised of two unbalanced cycles $\C_m$ and $\C_n$ sharing a single vertex. Then we show that the toric ideal $I_D$ is generated by the primitive binomial corresponding to the vector
$$\displaystyle \frac{1}{d}\Bigg( \left((-1)^{i+1}pM_{m-1}(A({\C_m})[1|i])\right)_{i=1}^{m}, \left((-1)^{i}qM_{n-1}(A({\C_n})[1|i])\right )_{i=1}^{n} \Bigg) \in \ZZ^{m+n},$$ 
where $q=det(A(\C_m)), p=det(A(\C_n))$, and $d$ is the gcd of all the entries (without sign) in the this vector (Theorem \ref{sec4thm2}). Now, let $D$ be the weighted oriented graph comprised of two unbalanced cycles $\C_m$ and $\C_n$ connected by a path $P$ of length $k$. Then we show that $I_D$ is generated by the primitive binomial corresponding to the vector 
$$\frac{1}{d} \Bigg(((-1)^{i+1} p r_{i})_{i=1}^{m},((-1)^{i} pq r_{m+i})_{i=1}^{k},((-1)^{i+k} q r_{m+k+i})_{i=1}^{n} \Bigg) \in \ZZ^{m+k+n}, \; \text{where}$$ 
\begin{eqnarray*}
r_{i} &=& M_{k}(A(P)[k+1|\emptyset])M_{m-1}(A({\C_m})[1|i]),\; \text{ for }\;1\le i\le m, \\ 
r_{m+i}&=& M_{k-1}(A(P)[1,k+1|i]),\; \text{ for }\;1\le i\le k, \\ 
r_{m+k+i}&=& M_{k}(A(P)[1|\emptyset])M_{n-1}(A({\C_n})[1|i]),\; \text{ for }\;1\le i\le n,
\end{eqnarray*}
$p,q$ as above and $d=\mbox{gcd}((|p|r_{i})_{i=1}^{m}, (|pq|r_{m+i})_{i=1}^k, (|q|r_{m+k+i})_{i=1}^n)$ (see Theorem \ref{sec4thm3}). Finally, let $D$ be the weighted oriented graph comprised of two unbalanced cycles $\C_m,\C_n$ sharing a path $P$ of length $k\geq 1$ such that the outer cycle is unbalanced. Then we prove that the toric ideal $I_D$ is generated by the primitive binomial corresponding to the vector 

$$\frac{1}{d}\Bigg (((-1)^{i+1}sr_{i})_{i=1}^{k},((-1)^{i+m-k+1}pr_{k+i})_{i=1}^{m-k},((-1)^{i+m-k}qr_{m+i})_{i=1}^{n-k} \Bigg ) \in \ZZ^{m+n-k},$$
\begin{eqnarray*}
\text{ where, } 
r_{i} &=& M_{k-1}(A(P)[1,k+1|i]),\; \text{ for }\;1\le i\le k, \\ 
r_{k+i}&=& M_{m-k-1}(A({\C_m}\setminus P)[1,m-k+1|i]),\; \text{ for }\;1\le i\le m-k, \\ 
r_{m+i}&=& M_{n-k-1}(A({\C_n}\setminus P)[1,n-k+1|i]),\; \text{ for }\;1\le i\le n-k,
\end{eqnarray*}
$q=\mbox{det}(A({\C_m})),p=\mbox{det}(A({\C_n})),s=\mbox{det}(A({\C}))$, and \\ 
$d=\mbox{gcd}((|s|r_{i})_{i=1}^{k},(|p|r_{k+i})_{i=1}^{m-k}, (|q|r_{m+i})_{i=1}^{n-k})$ (see Theorem \ref{sec4thm4}). Thus for any weighted oriented graph $D$ as one of above, then the supports of $f_{\bf x}^+, f_{\bf x}^-$ are depend on the signs of the determinants of incidence matrices of unbalanced cycles in $D$, where $f_{\bf x}=f_{\bf x}^+-f_{\bf x}^-$ is the primitive binomial generator of $I_D$.   

We organize the paper as follows. In section \ref{sec2}, we recall all the definitions, notations and basic results that are required to prove our main results in the subsequent sections. In section \ref{sec3}, we study the circuit binomials of weighted oriented graphs. Finally, in section \ref{sec4}, we derive an explicit formula for the primitive binomial generator of $I_D$, where $D$ is a balanced cycle or $D$ comprised of two weighted oriented unbalanced cycles connected by a path of length $\geq 0$, or $D$ comprised of two weighted oriented unbalanced cycles sharing a path.    

\section{Preliminaries} \label{sec2} 

In this section we recall various notions and results require to prove our main results. 
 Let $R=K[x_1,\ldots,x_n]$, where $K$ is a field.  $ n\times m$ matrix of integers $A=[{\bf a_1} \ldots {\bf a_m}]$ with columns ${\bf a_1}, \ldots, {\bf a_m}$. Let $M$ be the monomial ideal in $R$ minimally generated by the monomials $\{\textbf{x}^{\textbf{a}_1},\textbf{x}^{\textbf{a}_2}, \ldots ,\textbf{x}^{\textbf{a}_m}\}$. Let $S=K[e_1, \ldots ,e_m]$ be the polynomial ring in the variables $e_1,\ldots,e_m$. Then define a $K$-algebra homomorphism $\phi$ : $K[e_1, \ldots, e_m]\rightarrow K[x_1, \ldots , x_n]$, as $\phi(e_i) =\textbf{x}^{\textbf{a}_{i}}$. Then the kernel of $\phi$ is called the {\it toric} ideal of $M$ or $A$, and we denote by $I_M$ or $I_{A}$. Then it is known that the irreducible binomials 
 $$\displaystyle \prod_{k=1}^{m} e_k^{p_k} - \prod_{k=1}^{m} e_k^{q_k}, \text{ such that } \displaystyle \sum_{k=1}^m p_k\textbf{a}_k =\sum_{k=1}^m q_k\textbf{a}_k, \text{ for }(p_k,q_k)\neq (0,0),$$ generate $I_M$. Recall that the {\it $k$-minors} of an $m\times n$ matrix $A$ are the determinants of submatrices of $A$ of size $k \times k$.   

 \begin{definition}
For a vector $\mathbf{b} = \left( (-1)^{p_1}b_1,(-1)^{p_2}b_2, \ldots,(-1)^{p_m}b_{l}\right) \in \ZZ^{m}$, with $p_i \geq 1,b_i \geq 0$ integers, define the corresponding binomial in the variables $e_1,\ldots,e_m$ as
$f_{\mathbf{b}} :=f_{\mathbf{b}}^{+} -f_{\mathbf{b}}^{-},$  where 
$$f_{\mathbf{b}}^{+} := \prod\limits_{i=1 \;  (p_i \mbox{ even})}^{m}e_i^{b_i}, \mbox{ and } f_{\mathbf{b}}^{-} := \prod\limits_{i=1 \; (p_i \mbox{ odd})}^{m}e_i^{b_i}.$$  
\end{definition}
Recall that a binomial $f_{\bf b}$ is said to be {\it pure} if $\mbox{gcd}(f_{\bf b}^{+},f_{\bf b}^{-})=1$. For any vector ${\bf b}\in \ZZ^m$, let $[{\bf b}]_{i}$ denote the $i^{th}$ entry of ${\bf b}$. Define $\mbox{supp}({\bf b}) :=\{i:[{\bf b}]_{i}\neq 0\}$ and $\mbox{supp}(f_{\bf b}) :=\{e_{i}:[{\bf b}]_{i}\neq 0\}$. For any $S\subseteq \mbox{supp}({\bf b})$, we denote ${\bf b}\vert_{S}$ the vector with $\mbox{supp}({\bf b}\vert_{S})=S$ and $[{\bf b}\vert_{S}]_i=[{\bf b}]_i$ for all $i \in S$. The {\it sign} of an integer $n$ is defined as $sign(n) := \left\{
	\begin{array}{ll}
		1,  & \mbox{if } n \geq 0, \\
		-1, & \mbox{if } n < 0.
	\end{array}
\right.$ \\ 
 
 A {\it (vertex) weighted oriented graph} is a triplet $D= (V(D),E(D),{\bf w})$, where  $V(D) = \{x_1, \ldots,x_n\}$ is the vertex set of $D$, 
 $$E(D)=\{(x_i,x_j): \text{there is an edge from $x_i$ to $x_j$} \}$$ 
 is the edge set of $D$ and the weight function ${\bf w}:V(D)\rightarrow \NN$. We simply denote the weight function ${\bf w}$ by the vector ${\bf w}=(w_1,\ldots ,w_n)$.  
 The {\it edge ideal} of $D$ is defined as the ideal $I(D)=(x_ix_j^{w_j} : (x_i, x_j)\in E(D))$ in $R$. Then the toric ideal of $D$ is defined as the toric ideal of $I(D)$ and we denote by $I_D$. Thus $I_D=$ker$(\phi)$, where $\phi:K[e:e\in E(D)]\rightarrow R$. Let $E(D)=\{e_1,\ldots,e_m\}$. A {\it leaf} in $D$ is a vertex of degree $1$ in $D$. The {\it outdegree} of a vertex $v$ in a graph $D$ is defined as $|\{e_{}: e=(v,v^{\prime})\mbox{ for some\;} v^{\prime}\in V(D)\}|$. A vertex $v$ is said to be a {\it sink} if its outdegree is zero. 
 Recall that the {\it incidence matrix} of $D$ is an $n \times m$ matrix whose $(i,j)^{th}$ entry $a_{i,j}$ is defined by  
\begin{center}
$a_{i,j}=
\begin{cases}
1,\; \mbox { if } \;e_j = (x_i, x_l) \in E(D)\; \mbox{ for some }\;  1 \le l \le n,  \\
w_i,\; \mbox{ if }\;e_j = (x_l, x_i) \in E(D)\; \mbox{ for some }\; 1 \le l \le n, \\ 0,\; \mbox{ otherwise, } 
\end{cases}  
$
\end{center}
and we denote by $A(D)$. Recall that a weighted oriented even cycle $\mathcal{C}_m$ on $m$ vertices is said to be {\it balanced} if det$(A(\C_m))=0$, that is, $\displaystyle \prod_{k=1}^{m} a_{k,k} =a_{1,m} \prod_{k=1}^{m} a_{k+1,k}$, where $A(\mathcal{C}_m)=[a_{i,j}]_{m \times m}$. We denote $\mbox{Null}(A(D))$, the null space of $A(D)$ over $\QQ$. For a weighted oriented graph $D$, a pure binomial $f_{\bf m}\in I_D$ implies that ${\bf m}\in\mbox{Null}(A(D))$ and $[{\bf m}]_{i}$ denotes the $i$-th entry of ${\bf m}$ corresponding to the edge $e_i\in E(D)$. 

\begin{definition} 
Let $A$ be any $m\times n$ matrix. For any $1\leq k \leq min\{m,n\}$, we denote $M_k(A[i_1,\ldots,i_{m-k}|j_1,\ldots,j_{n-k}])$, the $k^{th}$ minor of $A$ by deleting the rows $i_1,\ldots,i_{m-k}$ and deleting columns $j_1,\ldots,j_{n-k}$ from $A$. If $m > n=k$, then we denote $M_k(A[i_1,\ldots,i_{m-k}|\emptyset])$, the the $k^{th}$ minor of $A$ by deleting the rows $i_1,\ldots,i_{m-k}$ and deleting no column from $A$.  
\end{definition}

\begin{lemma}\cite[Lemma 4.1]{bklo} \label{lem1}  Let $\mathcal{C}_m$ be a weighted oriented $m$-cycle and $f$ is any non-zero element of $I_D$. Then supp($f$) = $E(\mathcal{C}_m)$.  
\end{lemma}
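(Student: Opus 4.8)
The plan is to argue by contradiction, turning the hypothetical absence of an edge variable from $\mathrm{supp}(f)$ into the statement that $f$ lies in the toric ideal of a \emph{tree}, and then showing that such an ideal is zero.

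First I would invoke the elimination property of toric ideals: for any subset $T\subseteq\{1,\ldots,m\}$ of the columns of $A(\mathcal{C}_m)$ one has $I_{\mathcal{C}_m}\cap K[e_j:j\in T]=\ker\!\big(\phi|_{K[e_j:j\in T]}\big)$, which is exactly the toric ideal $I_{A_T}$ of the submatrix $A_T$ formed by the columns indexed by $T$; equivalently, it is the toric ideal of the subgraph of $\mathcal{C}_m$ on the edges $\{e_j:j\in T\}$. Hence, if $f\neq 0$ lay in $I_{\mathcal{C}_m}$ while some edge $e_i\notin\mathrm{supp}(f)$, then $f$ would be a nonzero polynomial in the variables $\{e_j:j\neq i\}$ belonging to the toric ideal of $\mathcal{C}_m\setminus e_i$.

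The core step is then to show that this smaller toric ideal is zero. Deleting one edge from the $m$-cycle leaves a path on $m$ vertices and $m-1$ edges, which is a tree, so it suffices to prove that the columns $\{\mathbf{a}_j:j\neq i\}$ of $A(\mathcal{C}_m)$ are linearly independent over $\mathbb{Q}$. I would do this by leaf-stripping: a leaf of the path is incident to a unique edge, so its row has a single nonzero entry, and since every nonzero entry of an incidence matrix is a positive integer (either $1$ or a vertex weight), any relation $\sum_j c_j\mathbf{a}_j=0$ forces the coefficient of that edge to vanish; removing the leaf and its edge gives a smaller tree, and induction kills all the $c_j$. Thus the submatrix has full column rank $m-1$ and its toric ideal is $0$, contradicting $f\neq 0$. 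Therefore no edge is missing and $\mathrm{supp}(f)=E(\mathcal{C}_m)$.

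I expect the only delicate point to be the elimination identity $I_{\mathcal{C}_m}\cap K[e_j:j\in T]=I_{A_T}$, since it is precisely what upgrades ``the variable $e_i$ does not occur in $f$'' to ``$f$ lies in a genuinely smaller toric ideal''; the vanishing of the tree's toric ideal is then a routine induction whose only real input is the positivity, hence nonvanishing, of the entries of $A(\mathcal{C}_m)$.
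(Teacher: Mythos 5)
The paper does not prove Lemma \ref{lem1} at all: it is imported verbatim from \cite{bklo}, so there is no in-paper argument to compare yours against, and I can only assess your proof on its own terms. It is correct. The elimination identity you flag as the delicate point is in fact immediate: since $I_{\mathcal{C}_m}=\ker\phi$, an element of $I_{\mathcal{C}_m}$ lying in $K[e_j: j\in T]$ is exactly an element of $\ker\bigl(\phi|_{K[e_j:j\in T]}\bigr)$, which is by definition the toric ideal of the column submatrix $A_T$, i.e.\ of the subgraph on those edges; this is precisely what the paper records as Proposition \ref{sec2pro1}(i). Your leaf-stripping step is also sound, granted the standing convention that weights are positive integers, so every nonzero entry of the incidence matrix is positive: the path $\mathcal{C}_m\setminus e_i$ has a leaf whose row kills one coefficient of any rational dependence among the columns, and induction gives full column rank $m-1$, hence the semigroup ring has dimension equal to the number of variables and the toric ideal of the path is zero. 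One genuine merit of your route is that it treats an arbitrary nonzero $f\in I_{\mathcal{C}_m}$, which is what the statement asserts; the tool the paper develops internally for such support arguments (Lemma \ref{sec3lem2}) propagates vanishing of coordinates of a single binomial exponent vector around the cycle, and so would directly give only the binomial case, after which one would still have to pass from binomials to general elements. Your elimination argument sidesteps that reduction cleanly.
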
 
\begin{theorem}\cite[Theorem 4.3, Algoritham 4.5]{bklo} \label{thm7} 
 If $\mathcal{C}_m$ is weighted oriented cycle, then the toric ideal   $I_{\mathcal{C}_m}$ is non-zero if and only if $\mathcal{C}_m $ is balanced. In fact, in this case, $I_{\C_{m}}$ is a principal ideal. 
\end{theorem}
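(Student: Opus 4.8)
The plan is to identify $I_{\C_m}$ with the lattice ideal of the integer kernel of the square matrix $A(\C_m)$ and to read off both assertions from the rank of this matrix. Recall from Section~\ref{sec2} that $I_{\C_m}$ is generated by the pure binomials $\mathbf{e}^{\mathbf{u}}-\mathbf{e}^{\mathbf{v}}$ with $A(\C_m)(\mathbf{u}-\mathbf{v})=\mathbf{0}$; hence $I_{\C_m}\neq 0$ precisely when the lattice $\ker_{\ZZ}(A(\C_m))=\{\mathbf{z}\in\ZZ^m : A(\C_m)\mathbf{z}=\mathbf{0}\}$ is nonzero. Since $A(\C_m)$ is a square $m\times m$ matrix, $\ker_{\ZZ}(A(\C_m))\neq 0$ if and only if $\ker_{\QQ}(A(\C_m))\neq 0$, that is, if and only if $\det(A(\C_m))=0$, which is exactly the condition that $\C_m$ be balanced. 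This settles the equivalence: in the unbalanced case the matrix is invertible over $\QQ$, the kernel is trivial, and $I_{\C_m}=0$.

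For the principality statement I would show that when $\det(A(\C_m))=0$ the rank of $A(\C_m)$ is exactly $m-1$. The inequality $\operatorname{rank}\le m-1$ is immediate from the vanishing determinant. For the reverse inequality I would exhibit a nonvanishing maximal minor: deleting the column indexed by any edge $e_i$ turns the cycle into a path, i.e.\ a tree on $m$ vertices with $m-1$ edges, whose incidence matrix remains of rank $m-1$; deleting the remaining row $1$ then yields, after a suitable permutation of rows and columns, a triangular matrix whose diagonal entries are among the weights and the entries $1$, so $M_{m-1}(A(\C_m)[1|i])$ is a nonzero product of positive integers. This is also consistent with Lemma~\ref{lem1}, which forces every nonzero element of $I_{\C_m}$ to have full support $E(\C_m)$, so the kernel vector cannot vanish in any coordinate. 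Consequently $\operatorname{rank}(A(\C_m))=m-1$ and $\ker_{\QQ}(A(\C_m))$ is one-dimensional.

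Being the saturation $\ker_{\QQ}(A(\C_m))\cap\ZZ^m$ of a one-dimensional space, $\ker_{\ZZ}(A(\C_m))$ has rank $1$ and is generated by a single primitive vector $\mathbf{w}$. By the adjugate identity $A(\C_m)\,\operatorname{adj}(A(\C_m))=\det(A(\C_m))I=0$, the signed maximal minors $(-1)^{i+1}M_{m-1}(A(\C_m)[1|i])$ form a kernel vector, so $\mathbf{w}$ is this vector divided by the gcd of its entries, recovering the formula of Theorem~\ref{sec4thm1}. Principality then follows from a standard divisibility argument: any binomial $\mathbf{e}^{\mathbf{u}}-\mathbf{e}^{\mathbf{v}}\in I_{\C_m}$ satisfies $\mathbf{u}-\mathbf{v}=k\mathbf{w}$ for some $k\in\ZZ$, and factoring $a^{k}-b^{k}=(a-b)(a^{k-1}+\cdots+b^{k-1})$ with $a=\mathbf{e}^{\mathbf{w}^{+}}$ and $b=\mathbf{e}^{\mathbf{w}^{-}}$ shows $\mathbf{e}^{\mathbf{u}}-\mathbf{e}^{\mathbf{v}}\in(f_{\mathbf{w}})$. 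Hence $I_{\C_m}=(f_{\mathbf{w}})$ is principal. The only delicate point is the rank computation in the middle paragraph; once the maximal minor is shown to be nonzero, the remaining steps are formal.
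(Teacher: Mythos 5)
Your argument is correct, but note that the paper does not prove this statement at all: Theorem~\ref{thm7} is imported verbatim from \cite[Theorem 4.3, Algorithm 4.5]{bklo}, so there is no in-paper proof to compare against. What you have written is a clean, self-contained linear-algebra proof of the cited result, and it is essentially the same machinery the paper itself redeploys later: your observation that a one-dimensional $\ker_{\QQ}(A(\C_m))$ forces principality is Remark~\ref{sec4rmk1}(i), your divisibility/factoring step is the content of Remark~\ref{sec4rmk1}(ii), and your identification of the kernel generator with the signed maximal minors $(-1)^{i+1}M_{m-1}(A(\C_m)[1|i])$ via the adjugate identity recovers Theorem~\ref{sec4thm1} (the paper instead derives these minors by solving the recurrences $a_{i,i-1}r_{i-1}=a_{i,i}r_i$ coming from $A(\C_m)\mathbf{x}=\mathbf{0}$; the adjugate route is slicker). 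All the individual steps check out: the maximal minor $M_{m-1}(A(\C_m)[1|1])=\prod_{k=2}^{m}a_{k,k}$ is a product of positive entries, so the rank is exactly $m-1$ when the determinant vanishes, and the reduction of an arbitrary binomial to a multiple of $f_{\mathbf{w}}$ via $a^k-b^k=(a-b)(a^{k-1}+\cdots+b^{k-1})$ is the standard lattice-ideal argument. One small point worth making explicit: the paper defines ``balanced'' only for \emph{even} cycles, so to get the stated equivalence you should remark that for $m$ odd the determinant $\prod_{k=1}^{m}a_{k,k}+a_{1,m}\prod_{k=2}^{m}a_{k,k-1}$ is a sum of positive terms and hence never zero, so odd cycles are automatically excluded on both sides of the ``if and only if''.
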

\noindent 
Below result is a graph theory version of \cite[Proposition 4.13]{s95}. 
\begin{proposition} \label{sec2pro1}
   Let $H$ be a oriented subgraph of a weighted oriented graph $D$ such that $V(D)=V(H)$. Then 
   \begin{enumerate}
       \item[(i)] $I_H=I_D\cap K[e_i: e_i \in E(H)]$, 
       \item[(ii)] $\mathcal{C}_H=\mathcal{C}_D\cap K[e_i: e_i \in E(H)]$,
       \item[(iii)]$\mathcal{U}_H=\mathcal{U}_D\cap K[e_i: e_i \in E(H)]$,
       \item[(iv)] $Gr_H=Gr_D\cap K[e_i: e_i \in E(H)]$. 
   \end{enumerate}
\end{proposition}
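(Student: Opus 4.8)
The plan is to reduce the proposition to Sturmfels' \cite[Proposition 4.13]{s95} on subconfigurations of columns; the only geometric input is the identification in Step 1, after which (i), (ii), (iv) are formal and (iii) is the delicate one.

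\textbf{Step 1 (realizing $A(H)$ as columns of $A(D)$).} Since $V(H)=V(D)$ and $H$ inherits the orientations and weights of $D$, both $A(H)$ and $A(D)$ have rows indexed by $V(D)$, and for each $e_i\in E(H)\subseteq E(D)$ the column of $A(H)$ at $e_i$ equals the column of $A(D)$ at $e_i$ (its entries only record the two endpoints of $e_i$ and the weight of its head, all unchanged in $H$). Hence $A(H)$ is the submatrix of $A(D)$ on the columns indexed by $E(H)$. Two immediate consequences: $\phi_H=\phi_D|_{K[e_i:\,e_i\in E(H)]}$, and for any $\mathbf{m}$ with $\mbox{supp}(\mathbf{m})\subseteq E(H)$ we have $A(D)\mathbf{m}=A(H)(\mathbf{m}\vert_{E(H)})$, so $\mathbf{m}\in\mbox{Null}(A(D))$ iff $\mathbf{m}\vert_{E(H)}\in\mbox{Null}(A(H))$. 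This is exactly the hypothesis of \cite[Proposition 4.13]{s95}, from which all four equalities follow; I next indicate the direct arguments.

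\textbf{Step 2 (part (i) and, from it, (ii) and (iv)).} For a binomial $f\in K[e_i:\,e_i\in E(H)]$ one has $f\in I_D\Leftrightarrow\phi_D(f)=0\Leftrightarrow\phi_H(f)=0\Leftrightarrow f\in I_H$, which is (i); in particular $I_H\subseteq I_D$. For (ii) and (iv) the key observation is that every relevant ``witness'' automatically lies in the subring: if $\mbox{supp}(f)\subseteq E(H)$ and $g$ is a binomial with $\mbox{supp}(g)\subsetneq\mbox{supp}(f)$ (for minimal support) or with $g^{+}\mid f^{+}$ and $g^{-}\mid f^{-}$ (for primitivity), then $\mbox{supp}(g)\subseteq E(H)$, so by (i) $g\in I_D\Leftrightarrow g\in I_H$. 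Thus $f$ has minimal support in $I_D$ iff in $I_H$, and $f$ is primitive in $I_D$ iff in $I_H$; together with (i) this gives $\mathcal{C}_H=\mathcal{C}_D\cap K[e_i:\,e_i\in E(H)]$ and $Gr_H=Gr_D\cap K[e_i:\,e_i\in E(H)]$.

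\textbf{Step 3 (part (iii), the main obstacle).} Unlike circuits and Graver elements, membership in the universal Gr\"obner basis has no purely support-theoretic witness, so this step needs genuine Gr\"obner-basis machinery. For the inclusion $\mathcal{U}_H\subseteq\mathcal{U}_D\cap K[e_i:\,e_i\in E(H)]$ I would take a term order $\prec$ on $K[e_i:\,e_i\in E(H)]$ realizing a given $f$ in the reduced Gr\"obner basis of $I_H$, extend it to an elimination order $\prec'$ on $K[e_i:\,e_i\in E(D)]$ in which the variables of $E(D)\setminus E(H)$ dominate, and apply the elimination property of reduced Gr\"obner bases: the reduced basis of $I_D$ for $\prec'$ meets $K[e_i:\,e_i\in E(H)]$ in the reduced basis of $I_D\cap K[e_i:\,e_i\in E(H)]=I_H$ for $\prec$, whence $f\in\mathcal{U}_D$. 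The reverse inclusion is the delicate half; it is precisely the content of \cite[Proposition 4.13]{s95}, which I would invoke, and it can also be approached through the chain $\mathcal{C}_D\subseteq\mathcal{U}_D\subseteq Gr_D$ combined with (ii) and (iv). I expect this step to be the main difficulty, since (i), (ii), (iv) are formal once Step 1 is set up, whereas controlling the universal Gr\"obner basis under restriction to the subring genuinely requires the term-order argument.
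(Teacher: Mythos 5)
Your proposal is correct and follows essentially the same route as the paper: the paper gives no proof of this proposition beyond noting it is the graph-theoretic form of \cite[Proposition 4.13]{s95}, and your Step 1 identification of $A(H)$ as the column submatrix of $A(D)$ indexed by $E(H)$ (valid because $V(H)=V(D)$ and $H$ inherits orientations and weights) is exactly the reduction that makes that citation apply. The direct arguments you supply for (i), (ii), (iv) and the elimination-order sketch for (iii) are the standard ones and are sound.
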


\section{Circuit binomials of weighted oriented graphs} \label{sec3}

In this section we describe circuit binomials of toric ideals of weighted oriented graphs. We show that if $D$ is a weighted oriented graph which has at most two unbalanced cycles connected by a path and no two balanced cycles connected by a path in $D$, then the toric ideal $I_D$ is a circuit ideal and $I_D$ is strongly robust. Also, we have shown that the generators of toric ideal of weighted oriented graphs are independent of weights of sink vertices. Note that for any $m\times n$ matrix $A$, its toric ideal $I_{A}$ is generated by $Gr_{A}$.

\begin{lemma} \label{sec3lem1}
    Let $D$ be any weighted oriented graph and $f_{\bf n}\in I_{D}$ be a pure binomial. Then $\mbox{supp}(f_{\bf n})$ can not contain any edge incident with a leaf in $D$.
\end{lemma}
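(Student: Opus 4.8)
The plan is to translate the statement about $\mathrm{supp}(f_{\bf n})$ into a statement about the vector ${\bf n}$ and then exploit the very sparse shape of the incidence matrix $A(D)$ at a leaf. By the observation recorded in Section~\ref{sec2}, a pure binomial $f_{\bf n}\in I_D$ forces ${\bf n}\in\mathrm{Null}(A(D))$, equivalently $A(D){\bf n}=0$; moreover an edge $e$ lies in $\mathrm{supp}(f_{\bf n})$ precisely when $[{\bf n}]_e\neq 0$. It therefore suffices to show that $[{\bf n}]_e=0$ for every edge $e$ incident with a leaf. I would read the equation $A(D){\bf n}=0$ one row at a time, using that the rows of $A(D)$ are indexed by the vertices of $D$.

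Now fix a leaf $v$ of $D$ and let $e$ be the unique edge incident with $v$. Because $v$ has degree $1$, the row of $A(D)$ indexed by $v$ has a single nonzero entry $a_{v,e}$: from the column description of $A(D)$, this entry equals $1$ when $v$ is the tail of $e$ and equals $w_v$ when $v$ is the head of $e$, so in either case $a_{v,e}$ is a positive integer. Consequently the $v$-th coordinate of $A(D){\bf n}=0$ collapses to the single relation $a_{v,e}\,[{\bf n}]_e=0$, whence $[{\bf n}]_e=0$. As $v$ ranges over all leaves of $D$, this yields $e\notin\mathrm{supp}(f_{\bf n})$ for every edge $e$ incident with a leaf, which is exactly the claim.

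The argument is structural rather than computational, so I do not expect a genuine obstacle; the two points that need to be checked carefully are that a degree-one vertex contributes a row with exactly one nonzero entry---immediate, since each column of $A(D)$ is supported on precisely the two rows corresponding to the endpoints of that edge---and that the relevant entry is actually nonzero, which holds because the vertex weights are positive, so both admissible values $1$ and $w_v$ are at least $1$. One should also note explicitly that purity of $f_{\bf n}$ is what permits the passage to ${\bf n}\in\mathrm{Null}(A(D))$, since it is precisely for pure binomials that the two supports are disjoint and ${\bf n}$ is well defined as a single integer vector.
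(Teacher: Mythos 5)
Your argument is correct and is essentially the paper's own proof: both reduce to the observation that purity gives ${\bf n}\in\mathrm{Null}(A(D))$, and that the row of $A(D)$ indexed by a leaf $v$ has a single nonzero (positive) entry, so the corresponding coordinate of $A(D){\bf n}=0$ forces $[{\bf n}]_e=0$. The only cosmetic difference is that the paper phrases this as a contradiction while you argue directly.
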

\begin{proof}
    Let $e_{i}$ be the edge incident with leaf $v$ such that $e_{i}\in\mbox{supp}(f_{\bf n})$. Then from $A(D)({\bf n})=0$ and the corresponding to the row with respect to the vertex $v$, we get $[{\bf n}]_{i}x=0$ for some positive integer $x$. This implies that $[{\bf n}]_{i}=0$ i.e. $e_{i}\notin\mbox{supp}(f_{\bf n})$ which is contradiction. This proves the lemma.
\end{proof}

\begin{lemma} \label{sec3lem2}
 Let $D$ be any weighted oriented graph and $f_{\bf m} \neq 0 \in I_D$. Let $v \in V(D)$ of degree $n$. If $(n-1)$ edges of $D$ incident with $v$ are not in supp$(f_{\bf m})$, then the other edge incident with $v$ is not in supp$(f_{\bf m})$. Moreover if the edge $e_i$ incident with $v$ belongs to supp$(f_{\bf m}^{+})( \mbox{ or supp}(f_{\bf m}^{-}))$, then there exists an edge $e_j$ incident with $v$ belongs to supp$(f_{\bf m}^{-})(\mbox{ or supp}(f_{\bf m}^{+}))$.  
\end{lemma}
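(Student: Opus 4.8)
The plan is to read off a single scalar equation from the relation $A(D)({\bf m})=0$. Since $f_{\bf m}\neq 0$ is a pure binomial in $I_D$, we have ${\bf m}\in\mbox{Null}(A(D))$, so the row of $A(D)$ indexed by the vertex $v$ gives
\[
\sum_{e_j \text{ incident with } v} a_{v,j}\,[{\bf m}]_j = 0,
\]
where the sum runs over the $n$ edges incident with $v$. The decisive structural feature of the incidence matrix is that each coefficient $a_{v,j}$ appearing here is \emph{strictly positive}: by definition $a_{v,j}=1$ when $v$ is the tail of $e_j$ and $a_{v,j}=w_v\ge 1$ when $v$ is the head of $e_j$, so no term can be killed by a vanishing coefficient. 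This, together with the sign convention $\mbox{supp}(f_{\bf m}^{+})=\{e_j:[{\bf m}]_j>0\}$ and $\mbox{supp}(f_{\bf m}^{-})=\{e_j:[{\bf m}]_j<0\}$ coming from the Definition, is all the input I need.

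For the first assertion I would argue exactly as in Lemma \ref{sec3lem1}: if $n-1$ of the incident edges are absent from $\mbox{supp}(f_{\bf m})$, then every term of the displayed sum vanishes except the one for the remaining edge $e_i$, leaving $a_{v,i}[{\bf m}]_i=0$; since $a_{v,i}>0$ this forces $[{\bf m}]_i=0$, i.e. $e_i\notin\mbox{supp}(f_{\bf m})$. For the ``moreover'' part, suppose $e_i$ is incident with $v$ and $e_i\in\mbox{supp}(f_{\bf m}^{+})$, so $[{\bf m}]_i>0$ and hence $a_{v,i}[{\bf m}]_i>0$. Because the whole sum is zero, at least one other incident edge $e_j$ must contribute a negative term $a_{v,j}[{\bf m}]_j<0$; positivity of $a_{v,j}$ then gives $[{\bf m}]_j<0$, i.e. $e_j\in\mbox{supp}(f_{\bf m}^{-})$. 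Interchanging the roles of $+$ and $-$ (equivalently, replacing ${\bf m}$ by $-{\bf m}$) yields the parenthetical statement.

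There is no real obstacle here; the only points that require care are bookkeeping ones. I must make sure I am using the correct identification between the positive/negative parts of the binomial and the positive/negative entries of ${\bf m}$ dictated by the sign convention, and I must invoke $w_v\ge 1$ so that every coefficient in the $v$-row is nonzero and of the same sign — it is precisely this positivity (the incidence matrix having no negative entries) that forces a cancellation between an edge in $\mbox{supp}(f_{\bf m}^{+})$ and an edge in $\mbox{supp}(f_{\bf m}^{-})$. In fact the first assertion is then just the special case of the ``moreover'' part in which no edge of the opposite sign is available at $v$, so both statements are two readings of the same one-line computation.
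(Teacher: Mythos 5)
Your argument is correct and is essentially the paper's own proof: both read off the single row of $A(D){\bf m}={\bf 0}$ corresponding to $v$ and use that every coefficient in that row (the paper's ``positive integers $x_k$'') is strictly positive, first to kill the lone remaining term and then to force a sign cancellation among the incident edges. No substantive difference.
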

\begin{proof}
Note that ${\bf m} \in$ Null$(A(D))$. Let $e_1,e_2,\ldots,e_{n}$ be the edges precisely incident with $v$. Suppose $e_1,e_2,\ldots,e_{i-1}, e_{i+1},\ldots,e_{n} \notin$ supp$(f_{\bf m})$. Then $[{\bf m}]_{k}=0$ for $k=1,2,\ldots,i-1,i+1,\ldots,n$. Then from the equation  $A(D){\bf m}={\bf 0}$, we get that $[{\bf m}]_{i}=0$. This implies that $e_i\notin \mbox{supp}(f_{\bf m})$, as required. 

Suppose $e_i\in\mbox{supp}(f_{\bf m}^{+})(\mbox{or}\in \mbox{supp }(f_{\bf m}^{-}))$. Then $[{\bf m}]_{i}>0(\mbox{or}\; [{\bf m}]_{i}<0)$. From the equation $A(D){\bf m}={\bf 0}$, we get $\sum\limits_{k=1}^{i-1}[{\bf m}]_{k}x_k+[{\bf m}]_{i}x_i+\sum\limits_{k=i+1}^{n}[{\bf m}]_{k}x_k=0$ for some positive integers $x_k$'s. This implies that there exists $j$ such that $[{\bf m}]_{j}<0(\mbox{or}\;[{\bf m}]_{j} >0)$. Thus $e_j\in\mbox{supp}(f_{\bf m}^{-})(\mbox{ or supp}(f_{\bf m}^{+}))$. 
\end{proof}

\noindent 
\begin{notation} \label{sec3nota2}
Let $D$ be a weighted oriented graph. 
\begin{enumerate}
    \item For a balanced cycle $\C_i$ in $D$, we know that its toric ideal $I_{\C_i}$ is generated by a single primitive binomial by Theorem \ref{thm7}, say $f_{\bf c_i}$, where ${\bf c_i} \in \mbox{Null}(A(D))$. 
    
    \item For two unbalanced cycles $\C_i, \C_j$ sharing a vertex in $D$, the toric ideal of $\C_i \cup \C_j$ is principal and generated by a primitive binomial by \cite[Theorem 5.1]{bklo}, say $f_{\bf c_i \cup c_j}$, where ${\bf c_i \cup c_j}$ denotes a vector in Null$(A(D))$. 
    
    \item Let ${\C_i}$ and ${\C_j}$ be unbalanced cycles share a path $P$ in $D$. Let $(\C_i \cup \C_j) \setminus P$ denotes the induced subgraph of $D$ whose edge set is $E({\C_i\cup\C_j})\setminus E(P)$. That is, $(\C_i \cup \C_j) \setminus P$ is a cycle and we call it as the outer cycle of $\C_i\cup \C_j$. Then by \cite[Theorem 5.1]{bklo}, the toric ideal of $\C_i \cup \C_j$ is principal, say generated by the primitive binomial $f_{\bf{c_ip c_j}}$, where $\bf{c_ip c_j}$ denotes a vector in Null$(A(D))$. 
    
    \item Let ${\C_i}$ and ${\C_j}$ be two unbalanced cycles connected by a path $P$ in $D$. Then by \cite[Theorem 5.1]{bklo}, the toric ideal of $\C_i\cup P \cup \C_j$ is principal, say generated by the primitive binomial  $f_{\bf{c_i\cup p\cup  c_j}}$, where $\bf{c_i\cup p\cup  c_j}$ denotes a vector in Null$(A(D))$. 
\end{enumerate}   
\end{notation}
\noindent 
Below we describe all circuit binomials in any weighted oriented graph. 

\begin{theorem} \label{sec3pro1}
Let $D$ be a weighted oriented graph. Assume the notation as in \ref{sec3nota2}. 
Then the set of all circuit binomials in $I_D$ is given by 
\begin{eqnarray*}
\mathcal{C}_D &=& \{f_{\bf c}: {\C} \mbox{ is a balanced cycle in } D \} \\ 
& & \cup \{f_{\bf{c_i \cup c_j}} : {\C_i},{\C_j} \mbox{\; are unbalanced cycles  share a vertex in } D \} \\ 
& & \cup \{ f_{\bf{c_ipc_j}} : {\C_i},{\C_j} \mbox{\; are unbalanced cycles sharing a path $P$ in } D \} \\ 
& & \cup \{f_{\bf{c_i\cup p \cup c_j}} : {\C_i},{\C_j} \mbox{\; are unbalanced cycles connected by a path $P$ in } D \}.
\end{eqnarray*}
\end{theorem}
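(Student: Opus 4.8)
The plan is to characterize circuits by their support structure, using the combinatorial constraints imposed by being in $\mbox{Null}(A(D))$ together with the minimality of support. Recall that a circuit is a binomial $f_{\bf m} \in I_D$ whose support is minimal under inclusion among nonzero elements, and that every circuit is pure (primitive). The strategy is to show that the support of any circuit must be exactly the edge set of one of the four listed graph types, and conversely that each of those four configurations yields a genuine circuit.

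First I would establish necessary local conditions on the support of a circuit $f_{\bf m}$. By Lemma \ref{sec3lem1}, $\mbox{supp}(f_{\bf m})$ contains no edge incident with a leaf, so the subgraph $H$ spanned by $\mbox{supp}(f_{\bf m})$ has minimum degree at least $2$ at every vertex it touches (here I use the second part of Lemma \ref{sec3lem2}: at each vertex $v$ met by the support, if one incident edge lies in the support then the vanishing of the corresponding row of $A(D){\bf m}={\bf 0}$ forces a second incident edge into the support, and a vertex cannot be supported by a single incident edge). Hence $H$ is a subgraph in which every vertex has degree $\ge 2$, so $H$ contains a cycle and in fact every edge of $H$ lies on a cycle; degree-$2$-everywhere graphs decompose into a small number of cycles and connecting paths. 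I would then argue that $H$ is connected (a disconnected support would split $f_{\bf m}$ into a product of smaller binomials, violating minimality) and that its cycle rank is tightly controlled: too many independent cycles would let me build a nonzero null vector supported on a proper subset, contradicting minimality. This forces $H$ to be one of: a single cycle, a "figure-eight" (two cycles sharing a vertex), a "theta" graph (two cycles sharing a path), or a "dumbbell" (two cycles joined by a path).

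Next I would pin down, for each topological type, which orientation/balance condition is needed for the configuration to actually support a nonzero null vector, invoking Theorem \ref{thm7}. For a single cycle $\C_m$, Theorem \ref{thm7} says $I_{\C_m} \ne 0$ iff $\C_m$ is balanced, so the one-cycle circuits are exactly the balanced cycles, giving $f_{\bf c}$. For the figure-eight, theta, and dumbbell types, I would use \cite[Theorem 5.1]{bklo} (as recorded in Notation \ref{sec3nota2}) which says the toric ideal of each such union of \emph{unbalanced} cycles is principal and generated by the stated primitive binomial $f_{\bf c_i \cup c_j}$, $f_{\bf c_i p c_j}$, or $f_{\bf c_i \cup p \cup c_j}$; conversely I must rule out that a \emph{balanced} cycle appears inside these two-cycle configurations, since a balanced cycle sitting inside the support would itself carry a nonzero null vector on a proper subset of the support, contradicting minimality of a would-be circuit on the larger graph. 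This balance dichotomy is what separates case (i) from cases (ii)--(iv) and is the conceptual heart of the classification.

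The converse direction — that each of the four binomials is genuinely a circuit — I would handle by the restriction Proposition \ref{sec2pro1}: passing to the subgraph $H$ equal to the relevant union of cycles, $\mathcal{C}_H = \mathcal{C}_D \cap K[e_i : e_i \in E(H)]$, so it suffices to check minimality of support \emph{within} $H$, where by Theorem \ref{thm7} and \cite[Theorem 5.1]{bklo} the ideal is principal; a single generator of a principal toric ideal automatically has minimal support, hence is a circuit. I expect the main obstacle to be the support-minimality bookkeeping in the second paragraph: precisely ruling out supports of higher cycle rank and supports containing a proper balanced sub-cycle, since I must show that in each such case one can extract a strictly smaller nonzero null vector. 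The clean way to do this is to note that whenever $H$ properly contains a balanced cycle or an unbalanced two-cycle configuration, Theorem \ref{thm7} / \cite[Theorem 5.1]{bklo} supply a nonzero binomial supported on that proper subgraph, immediately contradicting minimality; organizing the degree-$\ge 2$ subgraph case analysis so that it always reduces to one of these two supplying lemmas is the step that requires the most care.
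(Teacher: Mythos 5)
Your proposal is correct and follows essentially the same route as the paper: use Lemmas \ref{sec3lem1} and \ref{sec3lem2} to force the support to be a leafless subgraph containing a balanced cycle or one of the three two-unbalanced-cycle configurations, then invoke principality (Theorem \ref{thm7} and the cited results of Biermann--Kara--Lin--O'Keefe) together with the restriction Proposition \ref{sec2pro1} and support-minimality to conclude the circuit equals the corresponding primitive generator. You are in fact somewhat more explicit than the paper about the intermediate graph-theoretic step (connectivity and cycle rank of the support), which the paper asserts without detail; just replace the phrase about ``splitting into a product of smaller binomials'' by the correct observation that restricting a null vector to a connected component again gives a null vector of strictly smaller support.
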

\begin{proof}
 Let $\mathcal{A}$ denotes the right hand side set in the statement. It is easy to see that $\mathcal{A} \subseteq \mathcal{C}_D$ by using Theorem \ref{thm7}, Lemma \ref{lem1}, \cite[Theorem 5.1, Corollary 5.3,]{bklo}. Let $f\in{\C_{D}}$. Using Lemma \ref{sec3lem1}, $\mbox{supp}(f)$ can not contain any edge incident with a leaf. Let $D_{1}$ be a subgraph of $D$ such that $\mbox{supp}(f)=E(D_{1})$. As $0\neq f\in I_{D_{1}}$, then $D_{1}$ is not a unbalanced cycle. Then $D_{1}$ has a subgraph $D_{2}$ such that either $D_{2}$ is a balanced cycle or $D_{2}$ consisting of two unbalanced cycles share a vertex or $D_{2}$ consisting of two unbalanced cycles connected by a path or $D_{2}$ consisting of two unbalanced cycles share a path such that the outer cycle is unbalanced. By Theorem \ref{thm7}, \cite[Theorem 5.1]{bklo}, $I_{D_2}$ is principal, say $I_{D_2}=(g)$. Then $\mbox{supp}(g)\subseteq E(D_2)\subseteq E(D_1)=\mbox{supp}(f)$. Using Proposition \ref{sec2pro1}, $g\in I_{D}$. Since $f\in\mathcal{C}_{D}$, we get $\mbox{supp}(f)=\mbox{supp}(g)\subseteq E(D_{2})$. Since $f\in\mathcal{C}_{D}\subseteq Gr_{D}$, using Proposition \ref{sec2pro1}, $f\in Gr_{D_2}$. Thus $f$ is the generator of $I_{D_2}$. Then $f$ is of the form $f=f_{\bf c}$ or $f=f_{\bf{c_i\cup c_j}}$ or $f=f_{\bf{c_i\cup p\cup c_j}}$ or $f=f_{\bf{c_ipc_j}}$. Hence $f\in\mathcal{A}$ and then ${\C_{D}}\subseteq\mathcal{A}$. Therefore ${\C_{D}}=\mathcal{A}$.        
\end{proof}

\noindent 
Below we give a class of weighted oriented graphs $D$ whose toric ideals $I_D$ are circuit ideals. 

\begin{theorem} \label{sec3thm1}
Let $D$ be a weighted oriented graph and $D$ has at most two unbalanced cycles such that \\ 
(i) if $D$ has exactly two unbalanced cycles connected by a path $P$, then no other balanced cycle in $D$ shares an edge with the path $P$,  \\ 
(ii) no two balanced cycles in $D$ share a path. \\ 
Then 
$Gr_{D}=\mathcal{C}_{D}$. Moreover, $I_{D}$ is strongly robust. 
\end{theorem}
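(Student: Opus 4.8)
The plan is to establish the two assertions separately, using that $\mathcal{C}_D \subseteq Gr_D$ and $I_D=(Gr_D)$ are already available. Since $\mathcal{C}_D \subseteq Gr_D$ always holds, the identity $Gr_D=\mathcal{C}_D$ reduces to the reverse inclusion $Gr_D \subseteq \mathcal{C}_D$, i.e. that every primitive binomial is a circuit. So I would fix $f=f_{\bf m}\in Gr_D$ and let $D_1$ be the subgraph of $D$ with $E(D_1)=\mathrm{supp}(f)$; the aim is to prove that $D_1$ is exactly one of the four configurations of Theorem \ref{sec3pro1}. Once that is known, $I_{D_1}$ is principal by Theorem \ref{thm7} and \cite[Theorem 5.1]{bklo}, its primitive generator $g$ has full support $E(D_1)=\mathrm{supp}(f)$ (using Lemma \ref{lem1} in the balanced case), and since $f\in I_{D_1}=(g)$ is primitive it must equal $g$ up to sign, giving $f\in\mathcal{C}_D$.

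First I would pin down the shape of $D_1$. By Lemma \ref{sec3lem1}, $\mathrm{supp}(f)$ contains no edge incident to a leaf, and then Lemma \ref{sec3lem2} forces every vertex of $D_1$ to have degree at least $2$ in $D_1$; hence $D_1$ has minimum degree $\geq 2$ and is assembled out of cycles. As $D_1\subseteq D$, it inherits the hypotheses: at most two unbalanced cycles, and conditions (i) and (ii). Repeating the argument in the proof of Theorem \ref{sec3pro1}, $D_1$ contains a subgraph $D_2$ of one of the four admissible types, with $I_{D_2}=(g)$ principal and $\mathrm{supp}(g)\subseteq\mathrm{supp}(f)$. The whole point is to upgrade this to $D_1=D_2$.

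The heart of the proof, and what I expect to be the main obstacle, is excluding the possibility that $D_1$ strictly contains $D_2$, i.e. that $\mathrm{supp}(f)$ carries extra edges forming additional balanced cycles. I would run a case analysis on the number $u\in\{0,1,2\}$ of unbalanced cycles of $D_1$. The key mechanism is that a balanced cycle $\C'$ in $D_1$ whose edges lie outside the ``flow'' forced by the unbalanced part produces, via Theorem \ref{thm7}, a binomial $f_{\bf c'}\in I_D$ with $\mathrm{supp}(f_{\bf c'})\subsetneq\mathrm{supp}(f)$ whose two monomials divide the corresponding monomials of $f$, contradicting primitivity; so no such stray balanced cycle can occur. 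The only remaining ways an extra balanced cycle could attach are precisely those excluded by the hypotheses: two balanced cycles (or a balanced cycle and the outer cycle of a shared-path configuration) sharing a path is forbidden by (ii), and, when $D_2$ consists of two unbalanced cycles joined by a path $P$, a balanced cycle sharing an edge of $P$ is forbidden by (i). The delicate part of this step is the sign and exponent bookkeeping needed to check that $f_{\bf c'}$ genuinely divides $f$, since the vertex weights make the entries of ${\bf m}$ on a cycle no longer a simple $\pm 1$ pattern. Completing this case check yields $D_1=D_2$ and hence $Gr_D=\mathcal{C}_D$.

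Finally, for strong robustness I would show that the set $\mathcal{C}_D=Gr_D$, which generates $I_D$, is a minimal generating set by proving each of its elements is indispensable. Let $f\in\mathcal{C}_D$ and suppose $g=g^+-g^-\in I_D$ satisfies $g^+\mid f^+$ and $g^-\mid f^-$; then $\mathrm{supp}(g)\subseteq\mathrm{supp}(f)$, and since $f$ is a circuit its support is minimal, forcing $\mathrm{supp}(g)=\mathrm{supp}(f)$. Because $I_{D_1}$ is principal with a unique primitive generator up to sign, this gives $g=f$. Thus $f$ cannot be reduced by, nor expressed through, the other circuits, so every element of $\mathcal{C}_D=Gr_D$ is indispensable; consequently $Gr_D$ is a minimal (indeed the unique minimal) generating set of $I_D$, that is, $I_D$ is strongly robust.
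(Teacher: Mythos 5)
Your overall strategy is the same as the paper's: reduce $Gr_D=\mathcal{C}_D$ to showing that a primitive $f_{\bf m}$ whose support strictly contains one of the four minimal configurations must be divisible by the circuit binomial of some extra balanced cycle, contradicting primitivity. The genuine gap sits exactly at the point you label ``delicate sign and exponent bookkeeping'' and then do not carry out: you assert that a stray balanced cycle ${\C}^{\prime}$ inside $\mbox{supp}(f_{\bf m})$ yields, via Theorem \ref{thm7}, a binomial $f_{\bf c^{\prime}}$ whose two monomials divide those of $f_{\bf m}$. This is not automatic, and establishing it is essentially the entire content of the paper's proof. What must be shown is that ${\bf m}\vert_{E({\C}^{\prime})}\in\mbox{Null}(A({\C}^{\prime}))$, so that $f_{{\bf m}\vert_{E({\C}^{\prime})}}\in I_{{\C}^{\prime}}=(f_{\bf c^{\prime}})$ and hence $f_{\bf c^{\prime}}^{+}\mid f_{\bf m}^{+}$ and $f_{\bf c^{\prime}}^{-}\mid f_{\bf m}^{-}$. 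The paper gets this in two steps: (a) the hypotheses (at most two unbalanced cycles, (i), (ii)) are used to produce a balanced cycle ${\C}$ meeting the rest of the support graph in a \emph{single vertex} $v_1$, so that every other vertex of ${\C}$ has degree $2$ there and $n-1$ of the $n$ equations defining $\mbox{Null}(A({\C}))$ are literally rows of $A(D){\bf m}=0$; (b) the remaining equation at $v_1$ is obtained by telescoping those $n-1$ relations around the cycle, which produces the factor $\det(A({\C}))$ and this vanishes by balancedness. If ${\C}^{\prime}$ met the rest of the support in a path, step (a) would fail because the equations at interior shared vertices involve edges outside ${\C}^{\prime}$ --- this is precisely where hypotheses (i) and (ii) enter, and your observation that they ``exclude the remaining ways a cycle could attach'' needs to be converted into the positive structural claim that some balanced cycle attaches at a single vertex. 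Without (a) and (b) the contradiction with primitivity is not established, so as written the first assertion is not proved.

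For the second assertion your route differs from the paper's: the paper observes that under hypothesis (ii) the elements of $Gr_D=\mathcal{C}_D$ have pairwise disjoint supports, which gives minimality at once, whereas you argue indispensability of each circuit via the criterion that any binomial $g$ with $g^{+}\mid f^{+}$ and $g^{-}\mid f^{-}$ equals $f$. That criterion is standard and your argument can be completed (a binomial in the principal ideal $I_{D_1}=(f)$ whose monomials divide those of $f$ must equal $f$ by degree comparison), so this part is acceptable, just longer than necessary.
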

\begin{proof} 
Note that $\C_D \subseteq Gr_D$. 
Let $f_{\bf m}\in Gr_{D}\setminus\mathcal{C}_{D}$. Let $D^{\prime}$ be the subgraph of $D$ such that $E(D^{\prime})=\mbox{supp}(f_{\bf m})$. If $D^{\prime}$ has an  subgraph consisting of two unbalanced cycles ${\C_1}, {\C_2}$ share a path, then as $D$ has at most two unbalanced cycles, the outer cycle of ${\C_1}\cup{\C_2}$, say ${\C_3}$ is balanced and as $f_{\bf m}\notin\mathcal{C}_{D}$, then $E({\C_3})\subsetneqq E(D^{\prime})$. Using Lemma \ref{sec3lem1}, $\mbox{supp}(f_{\bf m})$ can not contain any edge incident with leaf. Since $f_{\bf m}\notin\mathcal{C}_{D}$ and $D$ has no balanced cycles sharing a path, then using Lemma \ref{sec3lem2}, we see that there is a balanced cycle, say ${\C}$ in $D$ such that ${\C}$ shares only one vertex, say $v_1$ with $D^{\prime\prime}$, where $D^{\prime\prime}$ is the subgraph of $D^{\prime}$, with $V(D^{\prime\prime})=V(D^{\prime})\setminus(V({\C})\setminus\{v_1\})$, $E(D^{\prime\prime})=E(D^{\prime})\setminus E({\C})$, $E({\C})\subsetneqq \mbox{supp}(f_{\bf m})=E(D^{\prime})$. Let $V({\C})=\{v_1,v_2,\ldots, v_{n}\}$, $E({\C})=\{e_1,e_2,\ldots, e_{n}\}$, where $e_{i}$ is incident with $v_{i}$ and $v_{i+1}$ for $1\le i\le n-1$, $e_{n}$ is incident with $v_{n}$ and $v_{1}$. Let $V(D)=\{v_1,\ldots,v_n, v_{n+1},\ldots, v_q\}$ and 
$E(D)=\{e_1,\ldots,e_n, e_{n+1},\ldots, e_{q^{\prime}}\}$. 
Let $A(D)=[a_{i,j}]$ be the incidence matrix of $D$ where $v_{i}$ corresponds to $i^{th}$ row and $e_{j}$ corresponds to $j^{th}$ column of $A(D)$. Then $a_{i,j}=0$ for $2\le i\le n$, $j\neq i, j\neq i-1$. Without loss of any generality, assume that $1\in\mbox{supp}({\bf m}_{+})$. Then using repeated applications of Lemma \ref{sec3lem2} , for $2\le i\le n$, $i\in\mbox{supp}({\bf m}_{+})$ for $i$ odd and $i\in\mbox{supp}({\bf m}_{-})$ for $i$ even. Then from $A(D){\bf m}=0$ and the corresponding to each $i^{th}$ row with respect to vertex $v_i$ for $2\le i\le n$, we get $a_{i,(i-1)}[{\bf m_{+}}]_{i-1}=a_{i,i}[{\bf m_{-}}]_{i}$ if $i$ even and $a_{i,(i-1)}[{\bf m_{-}}]_{i-1}=a_{i,i}[{\bf m_{+}}]_{i}$ if $i$ odd. Using above equations, we get
\begin{eqnarray*}
 a_{1,1}[{\bf m_{+}}]_{1} -a_{1,n}[{\bf m_{-}}]_{n} 
 &= &a_{1,1}\frac{a_{2,2}}{a_{2,1}}[{\bf m_{-}}]_{2}-a_{1,n}[{\bf m_{-}}]_{n} \\ 
 &=& a_{1,1}\frac{a_{2,2}}{a_{2,1}}\frac{a_{3,3}}{a_{3,2}}[{\bf m_{+}}]_{3}-a_{1,n}[{\bf m_{-}}]_{n} \\  
 &=&\frac{a_{1,1}a_{2,2}\cdots a_{n,n}}{a_{2,1}a_{3,2}\cdots a_{n,(n-1)}}[{\bf m_{-}}]_{n}- a_{1n}[{\bf m_{-}}]_{n}\\
 && (\text{by repeatedly using the above equations}) \\ 
 &=&\frac{a_{1,1}a_{2,2}\cdots a_{n,n}-a_{1,n}a_{2,1}\cdots a_{n,(n-1)}}{a_{2,1}a_{3,2}\cdots a_{n,(n-1)}}[{\bf m_{-}}]_{n} \\ 
&=&\frac{\mbox{det}(A({\C}))}{a_{2,1}a_{3,2}\cdots a_{n,(n-1)}}[{\bf m_{-}}]_{n} \\&=& 0 ~~~(\text{because det}(A(\mathcal{C}))=0 \text{ as $\mathcal{C}$ is balanced})
\end{eqnarray*}
This implies that ${\bf m}\vert_{\{1,2,\ldots,n\}}\in\mbox{Null}(A({\C}))$ i.e. $f_{\bf m\vert_{\{1,2,\ldots,n\}}}\in I_{\C}=(f_{\bf c})$. Thus $f_{\bf c}^{+}\vert f_{\bf m\vert_{\{1,2,\ldots,n\}}}^{+}$, $f_{\bf c}^{-}\vert f_{\bf m\vert_{\{1,2,\ldots,n\}}}^{-}$ and then $f_{\bf c}^{+}\vert f_{\bf m}^{+}, f_{\bf c}^{-}\vert f_{\bf m}^{-}$ which is contradiction as $f_{\bf m}\in Gr_{D}$ and $f_{\bf m}\neq f_{\bf c}$. Hence $Gr_{D}=\mathcal{C}_{D}$. Since no two balanced cycles in $D$ sharing a path, we have that all elements of $Gr_{D}$  have disjoint supports. This implies that $Gr_D$ is a minimal generating set of $I_D$. Thus $I_{D}$ is strongly robust. 
\end{proof}
\begin{corollary}
  Let $D$ be a weighted oriented graph of one of the below type: 
\begin{enumerate}
    \item[(i)] $D$ consisting of balanced cycles share only a vertex. 
    \item[(ii)] $D$ consisting of balanced cycles ${\C_1}, {\C_2}, \cdots, {\C_n}$ such that ${\C_i}$ and ${\C_{i+1}}$ are connected by a path for $i=1,2,\ldots,n-1$. 
\end{enumerate}
 Then $Gr_{D}=\mathcal{C}_{D}=\{f_{\bf c} : {\C} \mbox{ is cycle in\;}D\}$. 
\end{corollary}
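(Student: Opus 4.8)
The plan is to obtain the corollary as a direct consequence of Theorem \ref{sec3pro1} and Theorem \ref{sec3thm1}, once I check that in each of the two configurations $D$ contains no unbalanced cycle and that no two of its (balanced) cycles share a path.

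First I would pin down the cycle structure of $D$. In type (i) the prescribed balanced cycles meet pairwise in a single vertex, and in type (ii) consecutive cycles are joined by internally disjoint connecting paths; in neither situation do two cycles share an edge. Since forming a new cycle out of the given ones would force two of them to share an edge, no new cycle is created, and the connecting paths of type (ii) are bridges lying on no cycle. Hence the set of cycles of $D$ is exactly $\{\C_1,\ldots,\C_n\}$, each of which is balanced by hypothesis; in particular $D$ has no unbalanced cycle.

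Next I would verify the hypotheses of Theorem \ref{sec3thm1}. The requirement that $D$ have at most two unbalanced cycles holds because it has none, and condition (i) of that theorem is then vacuous. For condition (ii), recall that two cycles ``share a path'' only when they have a common subpath of length at least one, i.e.\ a common edge; by the previous paragraph any two of the $\C_i$ are edge-disjoint, so no two balanced cycles share a path. Theorem \ref{sec3thm1} therefore applies and yields $Gr_{D}=\mathcal{C}_D$ together with the strong robustness of $I_D$.

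Finally I would identify this common set via Theorem \ref{sec3pro1}. Because $D$ has no unbalanced cycle, the three families in that theorem built from unbalanced cycles are empty, so $\mathcal{C}_D=\{f_{\bf c} : \C \text{ is a balanced cycle in } D\}$. As every cycle of $D$ is balanced, this equals $\{f_{\bf c} : \C \text{ is a cycle in } D\}$, giving $Gr_{D}=\mathcal{C}_D=\{f_{\bf c} : \C \text{ is a cycle in } D\}$, as claimed. The only step needing genuine care is the combinatorial claim that gluing along a vertex or along connecting paths produces no additional cycle, so that the cycle set of $D$ is precisely $\{\C_1,\ldots,\C_n\}$; everything else is a direct citation of the two theorems.
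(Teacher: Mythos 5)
Your proposal is correct and follows exactly the route the paper intends: the corollary is stated without proof as an immediate consequence of Theorems \ref{sec3pro1} and \ref{sec3thm1}, and your verification that $D$ has no unbalanced cycles and that no two balanced cycles share a path is precisely what is needed to invoke them. The only slightly loose spot is the justification that no new cycles arise in type (i) (``would force two of them to share an edge''); the cleaner reason is that the common vertex is a cut vertex, so every cycle of $D$ lies in a single $\C_i$, but this does not affect the validity of the argument.
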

\begin{corollary}
   Let $D$ be a weighted oriented graph and $D$ has at most two unbalanced cycles such that \\ 
(i) if $D$ has exactly two unbalanced cycles connected by a path $P$, then no other balanced cycle in $D$ shares an edge with the path $P$,  \\ 
(ii) no two balanced cycles in $D$ share a path. \\ 
Let $b_D$ denotes the number of balanced cycles in $D$. Suppose $<$ denotes the degree reverse lexicographic term order. Then \\ 
\noindent 
(a) $I_D$ and $in_{<}(I_D)$ are complete intersection ideals. \\ 
\noindent 
(b) $\mu(I_D)=\mu(in_{<}(I_D))= 
    \left\{
	\begin{array}{ll}
		b_D+1,  & \mbox{if $D$ has two unbalanced cycles share a vertex},  \\
           b_D+1,  & \mbox{if $D$ has two unbalanced cycles connected by a path},  \\
		b_D, & \mbox{otherwise}. 
	\end{array}
\right.$ \\
\noindent 
(c) $\beta_{i}(I_D)=\beta_i(in_{<}(I_D))={\mu(I_D) \choose i}$, for all $i$, where $\beta_i(-)$ denotes the $i^{th}$ Betti number. The projective dimension of $I_D$ is equal to $\mu(I_D)$.
\end{corollary}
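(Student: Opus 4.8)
The plan is to establish the corollary by combining the structural result of Theorem~\ref{sec3thm1} (that $Gr_D=\mathcal{C}_D$ and $I_D$ is strongly robust, so $\mathcal{C}_D$ is a minimal generating set) with a careful count of the supports of the minimal generators. First I would observe that by Theorem~\ref{sec3pro1} together with the hypotheses, the elements of $\mathcal{C}_D$ are exactly: one binomial $f_{\bf c}$ per balanced cycle $\C$ in $D$, plus at most one additional binomial coming from the two unbalanced cycles (of type $f_{\bf c_i\cup c_j}$, $f_{\bf c_i p c_j}$, or $f_{\bf c_i\cup p\cup c_j}$, depending on how the two unbalanced cycles meet). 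This immediately yields the count in part (b): $\mu(I_D)=|\mathcal{C}_D|$ equals $b_D$ plus $1$ when the two unbalanced cycles share a vertex or are connected by a path, and equals $b_D$ otherwise (when there are fewer than two unbalanced cycles, or the two unbalanced cycles share a path, in which case the outer cycle is balanced and its binomial is already counted among the $b_D$).

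For part (a), the key point I would emphasize is that, as shown in the proof of Theorem~\ref{sec3thm1}, the minimal generators in $Gr_D=\mathcal{C}_D$ have pairwise disjoint supports. A set of binomials with pairwise disjoint supports forms a regular sequence: concretely, each generator $f_j$ involves a distinct set of variables, so one can order the variables so that the leading terms (under the degree reverse lexicographic order) are products of distinct variables, hence the leading terms themselves are pairwise coprime monomials. I would then invoke the standard fact that an ideal generated by a homogeneous sequence whose elements have pairwise disjoint variable supports is a complete intersection, and that the same holds for the initial ideal: since the generating binomials are pure (disjoint $f^+,f^-$) and mutually support-disjoint, the reduced Gröbner basis is exactly $\{f_1,\ldots,f_{\mu}\}$ and $in_<(I_D)=(in_<(f_1),\ldots,in_<(f_\mu))$ is generated by pairwise coprime monomials, hence is itself a complete intersection. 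This gives (a).

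For part (c), I would use that the Koszul complex on the regular sequence $f_1,\ldots,f_\mu$ resolves $S/I_D$ minimally, since a regular sequence of homogeneous elements is always a minimal generating set of a complete intersection and the Koszul complex on such a sequence is a minimal free resolution. Therefore the Betti numbers are the binomial coefficients $\binom{\mu}{i}$, and the projective dimension of $S/I_D$ equals $\mu$, so (reading indices as in the statement) $\beta_i(I_D)=\binom{\mu(I_D)}{i}$ and $\pd(I_D)=\mu(I_D)$. The identical statement for $in_<(I_D)$ follows because $in_<(I_D)$ is also a complete intersection on $\mu$ elements (the coprime leading monomials), so its Koszul resolution is again minimal with the same Betti numbers; alternatively one cites that the total Betti numbers of $I_D$ and its initial ideal agree here because no consecutive cancellation occurs in a complete intersection.

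The main obstacle I expect is making the \emph{disjoint supports} claim fully rigorous and tying it precisely to the hypotheses. The count in (b) and the regularity argument both rest on the assertion that distinct minimal generators share no edge, i.e. no variable; this is exactly what the strong robustness proof of Theorem~\ref{sec3thm1} delivers, but one must check it survives in every case allowed by the hypotheses — in particular the borderline case where the two unbalanced cycles share a path, so that the balanced outer cycle's binomial coincides with the unbalanced-pair binomial and is not double-counted. Once disjointness is secured, the complete intersection and Koszul arguments are formal; the only care needed is the bookkeeping of how $\mu(I_D)$ depends on the configuration of the (at most two) unbalanced cycles, which is precisely the case distinction recorded in part (b).
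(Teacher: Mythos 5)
Your proposal is correct and follows essentially the same route as the paper: it combines the classification of circuits from Theorem~\ref{sec3pro1} with the strong robustness and disjoint-support observation from Theorem~\ref{sec3thm1} to get a regular sequence with coprime leading terms, and then reads off the Betti numbers from the Koszul complex. Your treatment is in fact slightly more careful than the paper's in spelling out why the shared-path case contributes no extra generator (the outer cycle is forced to be balanced and is already counted in $b_D$) and why disjoint supports yield the complete intersection property for both $I_D$ and $in_<(I_D)$.
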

\begin{proof}
By the Theorem \ref{sec3pro1}, the circuit binomials in $D$ are precisely the primitive binomials corresponding to the balanced cycles in $D$ and the primitive binomial corresponding to the subgraph consists of two unbalanced cycles connected by a path. This implies that $\mu(I_D)=b_D$ or $b_D+1$, accordingly as in statement. By the Theorem \ref{sec3thm1}, we have that $I_D$ is strongly robust, that is, $\C_D=Gr_D$ is a minimal generating set of $I_D$ and in fact, $Gr_D$ is a Gr\"obner basis of $I_D$ with respect to the degree reverse lexicographic order $<$. This implies that $I_D$ and $in_{<}(I_D)$ are complete intersection ideals and $\mu(I_D)=\mu(in_{<}(I_D))$. This proves (a) and (b). The Koszul complexes of $I_D$ and $in_{<}(I_D)$ give minimal free resolutions and $\beta_{i}(I_D)=\beta_i(in_{<}(I_D))={\mu(I_D) \choose i}$, for all $i$, and the projective dimension of $I_D$ is equal to $\mu(I_D)$. This proves (c). 
\end{proof}

\begin{proposition} \label{sec3pro2}
Let $D$ be a weighted oriented graph. Let $D^{\prime}$ be the weighted oriented graph obtained from $D$ by replacing the weights of all sink vertices in $D$ by $1$. Then $I_{D}=I_{D^{\prime}}$. Moreover, the generators of $I_D$ are independent of the weights of the sink vertices in $D$.
\end{proposition}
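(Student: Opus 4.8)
The plan is to reduce the equality $I_D=I_{D'}$ to the equality of the integer null spaces of the two incidence matrices, and then to observe that the sink weights merely rescale certain rows of the incidence matrix by nonzero factors, which leaves the null space untouched. Recall from Section~\ref{sec2} that for a weighted oriented graph $D$ the toric ideal $I_D$ is generated by the pure binomials $f_{\bf m}$ with ${\bf m}\in\mbox{Null}(A(D))\cap\ZZ^m$; hence $I_D=I_{D'}$ will follow once we show $\mbox{Null}(A(D))=\mbox{Null}(A(D'))$.

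First I would compare $A(D)$ and $A(D')$ row by row. The two matrices have the same columns, indexed by the same edges, and since the weight $w_i$ of a vertex $x_i$ occurs in the incidence matrix only in the $i$-th row (at the edges having $x_i$ as head), changing only the sink weights alters only the rows indexed by sink vertices. Fix a sink vertex $v=x_i$. Because the outdegree of $x_i$ is zero, every edge $e_j$ incident with $x_i$ has $x_i$ as its head, so $a_{i,j}=w_i$ for each such edge and $a_{i,j}=0$ otherwise. Thus the $i$-th row of $A(D)$ equals $w_i\,\chi_i$, where $\chi_i$ is the $0/1$ row recording the edges incident with $x_i$, whereas the $i$-th row of $A(D')$ is exactly $\chi_i$. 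In other words, each sink row of $A(D)$ is a positive scalar multiple of the corresponding row of $A(D')$, and all non-sink rows coincide.

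Next I would read off the null spaces from the linear system $A(D){\bf m}=0$, which has one equation per vertex. The equations contributed by non-sink vertices are literally identical for $A(D)$ and $A(D')$. For a sink vertex $x_i$ the equation reads $w_i\sum_{j}[{\bf m}]_j=0$, the sum being over the edges incident with $x_i$; since $w_i\ge 1$ is a nonzero scalar, this is equivalent to $\sum_{j}[{\bf m}]_j=0$, which is precisely the equation that $x_i$ contributes to $A(D'){\bf m}=0$. Therefore $A(D){\bf m}=0$ if and only if $A(D'){\bf m}=0$, so $\mbox{Null}(A(D))=\mbox{Null}(A(D'))$ as $\QQ$-subspaces, and intersecting with $\ZZ^m$ gives the same integer kernel. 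By the generation statement recalled above, $I_D=I_{D'}$.

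The same computation yields the final assertion at once: the sink equation $w_i\sum_{j}[{\bf m}]_j=0$ is equivalent to $\sum_{j}[{\bf m}]_j=0$ for every admissible value $w_i\ge 1$, so neither the null space of the incidence matrix nor, consequently, the generating binomials of $I_D$ depend on the weights assigned to the sink vertices. I do not expect a genuine obstacle here; the only points requiring care are the (standard) facts that scaling a row of an integer matrix by a nonzero integer preserves both its rational null space and its integer kernel, and that $I_D$ is completely determined by that integer kernel, as recorded in Section~\ref{sec2}.
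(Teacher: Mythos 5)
Your proposal is correct and follows essentially the same route as the paper: the paper's proof also rests on the observation that every nonzero entry in a sink row of $A(D)$ equals the sink's weight, so that row is a positive scalar multiple of the corresponding row of $A(D')$, whence $\mbox{Null}(A(D))=\mbox{Null}(A(D'))$ and the two toric ideals (being determined by this kernel) coincide. Your write-up just spells out the row-scaling argument in more detail than the paper's one-line justification.
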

\begin{proof}
Note that $\mbox{Null}(A(D))=\mbox{Null}(A(D^{\prime}))$ because all the entries in the row corresponding to a sink vertex, are equal. Thus if $f_{\bf m}$ is primitive binomial in $I_{D}$, then it also a primitive binomial in $I_{D^{\prime}}$ and vice-versa. Hence $Gr_{D}=Gr_{D^{\prime}}$. This gives that $I_{D}=I_{D^{\prime}}$.  
\end{proof}
\begin{corollary} \label{sec3cor1}
Let $D$ be weighted oriented graph such that $V^{+}$ are sinks and $G$ be the underlying simple graph of $D$. Then $I_{D}=I_{G}$.    
\end{corollary}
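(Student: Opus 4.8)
The plan is to deduce this directly from Proposition \ref{sec3pro2}, after a short translation of the hypothesis. First I would unpack what is being assumed. By definition $V^{+}$ is the set of vertices of $D$ whose weight exceeds $1$, so the assumption that every vertex of $V^{+}$ is a sink is equivalent to the statement that every \emph{non-sink} vertex of $D$ already carries weight $1$. This is the crucial reformulation: it says the only vertices that could have a nontrivial weight are exactly the sinks.

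Next I would invoke Proposition \ref{sec3pro2}. Let $D^{\prime}$ be the graph obtained from $D$ by resetting the weight of each sink vertex to $1$. Combining this with the reformulation above, every vertex of $D^{\prime}$ now has weight $1$: the sinks by construction, and the non-sinks because they already did. Proposition \ref{sec3pro2} then hands us $I_{D}=I_{D^{\prime}}$ with no further work, so the entire problem reduces to identifying $I_{D^{\prime}}$ with $I_{G}$. To finish, I would verify the routine but essential point that a weighted oriented graph all of whose weights equal $1$ has the same toric ideal as its underlying simple graph: for an edge $e_{j}=(x_i,x_l)\in E(D^{\prime})$ the corresponding generator of the edge ideal is $x_i x_l^{w_l}=x_i x_l$, which is symmetric in $x_i,x_l$ and hence orientation-independent. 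Thus $I(D^{\prime})=I(G)$, the matrix $A(D^{\prime})$ is exactly the $0/1$ incidence matrix of $G$, and the two defining toric maps coincide, giving $I_{D^{\prime}}=I_{G}$. Chaining the equalities yields $I_{D}=I_{D^{\prime}}=I_{G}$.

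The step I expect to require the most care is the first one, namely correctly converting the hypothesis ``$V^{+}$ are sinks'' into ``all non-sink vertices have weight $1$,'' since it is precisely this observation that makes the single reset in Proposition \ref{sec3pro2} trivialize \emph{every} weight simultaneously rather than only the sink weights. Once that reduction is secured, the remaining identification $I_{D^{\prime}}=I_{G}$ carries essentially no difficulty and amounts to noting that weight $1$ collapses $x_i x_l^{w_l}$ to the symmetric monomial $x_i x_l$.
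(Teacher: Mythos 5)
Your argument is correct and follows exactly the paper's own route: form $D'$ by resetting sink weights to $1$, apply Proposition \ref{sec3pro2} to get $I_D=I_{D'}$, observe that all weights of $D'$ are now $1$ so $I(D')=I(G)$, and conclude $I_{D'}=I_G$. The only difference is that you spell out the reformulation of the hypothesis and the weight-$1$ identification more explicitly than the paper does, which is a matter of exposition rather than substance.
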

\begin{proof}
 Let $D^{\prime}$ be the graph obtained from $D$ by replacing weights of sinks by 1. Then using Proposition \ref{sec3pro2}, $I_{D}=I_{D^{\prime}}$. Note that $I(D^{\prime})=I(G)$. This implies that $I_{D^{\prime}}=I_{G}$. Hence $I_{D}=I_{G}$.   
\end{proof}

\section{combinatorial formulas for circuit binomials of toric ideals of weighted oriented graphs} \label{sec4} 

In this section, we study combinatorial characterization of primitive binomial generators of toric ideals of weighted oriented graphs. We give explicit combinatorial formulas of generators of $I_D$, where $D$ is any balanced cycle or $D$ is comprised of two unbalanced cycles sharing a path of length $\geq 1$ or $D$ is comprised of two unbalanced cycles connected by a path. 

Let $\C_{n}$ be a weighted oriented $n$-cycle, then we label the vertices and edges as the edge $e_i$ is incident with the vertices $v_i$ and $v_{i+1}$, for $1\leq i \leq n-1$, and the edge $e_n$ is incident with the vertices $v_n$ and $v_1$. We call this labelling as {\it usual labelling} of $\C_n$. The usual labelling of a path $P$ of length $k$ is defined as, the edge $e_i$ is incident with the vertices $v_i$ and $v_{i+1}$, for $1\leq i \leq k$. 

\begin{definition} \label{sec4def1}
(1) For any weighted oriented $n$-cycle $\C_{n}$, let $M_{\ell}(A(\C_n)[i_1,\ldots,i_{n-\ell}|j_1,\ldots,j_{n-\ell}])$ denotes the $\ell^{th}$ minor of $A(\C_n)$ by deleting the rows $i_1,\ldots,i_{n-\ell}$ and deleting the columns $j_1,\ldots,j_{n-\ell}$ from $A(\C_n)$ with respect to the usual labelling of $\C_n$. 
\vskip 0.2cm
\noindent 
(2) For any weighted oriented path $P$ of length $k$, let $M_{\ell}(A(P)[i_1,\ldots,i_{k+1-\ell}|j_1,\ldots,j_{k-\ell}])$ denotes the $\ell^{th}$ minor of $A(P)$ by deleting the rows $i_1,\ldots,i_{k+1-\ell}$ and deleting the columns $j_1,\ldots,j_{k-\ell}$ from $A(P)$ with respect to the usual labelling of $P$.
\end{definition}

\begin{remark} \label{sec4rmk1}
(i) For any matrix $A$ with non-negative integer entries, if $\dim_{\QQ}\mbox{Null}(A)=1$, then $I_A$ is a principal ideal.\\
(ii) For any matrix $A$ with non-negative integer entries, if $\dim_{\QQ}\mbox{Null}(A)=1$ and $f_{\bf a}\in I_{A}$ is a pure binomial, then $f_{\frac{1}{d}{\bf a}}$ is primitive and generates $I_{A}$ where $d=\mbox{gcd}([{\bf a}]_{i} : i\in\mbox{supp}({\bf a}))$.
\begin{proof} (ii)  
Suppose $f_{\frac{1}{d}{\bf a}}$ is not primitive. Then there exists $f_{\bf x}\in I_{A}$ such that $f_{\bf x}^{+}\vert f_{\frac{1}{d}{\bf a}}^{+}, f_{\bf x}^{-}\vert f_{\frac{1}{d}{\bf a}}^{-}$. Since $\dim_{\QQ}\mbox{Null}(A)=1$, we can write ${\bf x}=\lambda\frac{1}{d}{\bf a}$ where $\lambda\in{\QQ}$. Then $\lambda>0$ since $\mbox{supp}({\bf x}_{+})\subseteq\mbox{supp}(\frac{1}{d}{\bf a}_{+})$. Let $\lambda=\frac{p}{q}$ where $\mbox{gcd}(p,q)=1$, $p,q\in{\NN}$. Then $q\vert\frac{[{\bf a}]_{i}}{d}$ for each $i\in\mbox{supp}({\bf a})$. This implies that $q\vert\mbox{gcd}(\frac{[{\bf a}]_{i}}{d})$ for each $i\in\mbox{supp}({\bf a})$ and then $q=1$ as $\mbox{gcd}(\frac{[{\bf a}]_{i}}{d})=1$. Thus ${\bf x}=p\frac{1}{d}{\bf a}$. Since $f_{\bf x}^{+}\vert f_{\frac{1}{d}{\bf a}}^{+}$, then $p$ must be equal to 1. We get ${\bf x}=\frac{1}{d}{\bf a}$ i.e. $f_{\bf x}=f_{\frac{1}{d}{\bf a}}$ which is contradiction. Hence $f_{\frac{1}{d}{\bf a}}$ is primitive.
\end{proof}
\end{remark}

\begin{notation} \label{sec4nota2}
  Let $\C_{2n}$ be a balanced cycle on the vertex set $\{ v_1,\ldots,v_{2n}\}$ with $w(v_i)=w_i$, for $i=1,\ldots,2n$ and edge set $\{e_1,\ldots,e_{2n}\}$, where $e_i$ is incident with $v_i$ and $v_{i+1}$ for $i= 1,2, \ldots, 2n$ under convention that $v_{2n+1}=v_1$. Let $A({\C_{2n}})$ be the incidence matrix of $\C_{2n}$ where $v_{i}$ corresponds to $i^{th}$ row and $e_{i}$ corresponds to $i^{th}$ column for $i=1,2,\ldots,2n$.      
\end{notation}

\noindent 
Below we give an explicit formula for the primitive binomial generator of a balanced cycle in terms of the minors of its incidence matrix. 

\begin{theorem} \label{sec4thm1}
Let $\C_{2n}$ be a balanced cycle as in Notation \ref{sec4nota2}. Then $I_{\C_{2n}}$ is generated by the primitive binomial $f_{\bf{c_{2n}}}$,  where 
$${\bf c_{2n}}=\frac{1}{d}\Bigg ((-1)^{i+1}M_{2n-1}(A(\C_{2n})[1|i]) \bigg)_{i=1}^{2n} \in \ZZ^{2n},$$ 
and $d=\mbox{gcd}(M_{2n-1}(A(\C_{2n})[1|i]))_{i=1}^{2n}$. 
\end{theorem}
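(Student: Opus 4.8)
The plan is to reduce everything to Remark \ref{sec4rmk1}(ii). That remark tells me that once I know $\dim_{\QQ}\mbox{Null}(A(\C_{2n}))=1$, it suffices to produce a single pure binomial $f_{\bf v}\in I_{\C_{2n}}$ arising from an integer null vector ${\bf v}$; then $f_{\frac{1}{d}{\bf v}}$ is automatically primitive and generates the ideal, where $d$ is the gcd of the entries of ${\bf v}$. So the problem splits into (a) checking that the null space is one-dimensional, (b) exhibiting the explicit null vector as the vector of first-row cofactors of $A(\C_{2n})$, and (c) identifying those cofactors with the signed minors in the statement and verifying that the resulting binomial is pure.

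For step (a): since $\C_{2n}$ is balanced, $\det(A(\C_{2n}))=0$ by definition, so $A(\C_{2n})$ has rank at most $2n-1$. To see the rank is exactly $2n-1$ I would exhibit one nonzero $(2n-1)$-minor, for instance $M_{2n-1}(A(\C_{2n})[1|1])$: deleting the first row and first column of the cyclically bidiagonal incidence matrix leaves a triangular matrix whose determinant is a product of nonzero entries $a_{i+1,i}$ (each a $1$ or a vertex weight, hence positive). Thus $\dim_{\QQ}\mbox{Null}(A(\C_{2n}))=1$, consistent with the principality asserted in Theorem \ref{thm7}.

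For steps (b) and (c): I would take ${\bf v}=\left((-1)^{i+1}M_{2n-1}(A(\C_{2n})[1|i])\right)_{i=1}^{2n}$, the vector of cofactors of the first row of $A(\C_{2n})$. From the adjugate identity $A(\C_{2n})\cdot\mbox{adj}(A(\C_{2n}))=\det(A(\C_{2n}))\,I$ together with $\det(A(\C_{2n}))=0$, the (true and false) cofactor expansions give $\sum_{i}a_{k,i}[{\bf v}]_{i}=\det(A(\C_{2n}))\,\delta_{k,1}=0$ for every row $k$, i.e. $A(\C_{2n}){\bf v}={\bf 0}$. Next I would show each minor $M_{2n-1}(A(\C_{2n})[1|i])$ is strictly positive: deleting row $1$ and column $i$ breaks the cyclic bidiagonal matrix into two triangular blocks, so the minor equals a product of nonzero entries (weights and $1$'s). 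Consequently the sign of $[{\bf v}]_i$ is exactly $(-1)^{i+1}$, so ${\bf v}$ is nonzero with alternating signs, and in the notation of the definition of $f_{\bf b}$ the supports of $f_{\bf v}^{+}$ and $f_{\bf v}^{-}$ are the odd- and even-indexed edges, which are disjoint; hence $f_{\bf v}$ is pure. Since ${\bf v}\in\mbox{Null}(A(\C_{2n}))\cap\ZZ^{2n}$ and $f_{\bf v}$ is pure, $f_{\bf v}\in I_{\C_{2n}}$.

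Finally, dividing by $d=\mbox{gcd}(M_{2n-1}(A(\C_{2n})[1|i]))_{i=1}^{2n}>0$ gives ${\bf c_{2n}}=\frac{1}{d}{\bf v}\in\ZZ^{2n}$, and Remark \ref{sec4rmk1}(ii) immediately yields that $f_{\bf c_{2n}}$ is primitive and generates $I_{\C_{2n}}$. The only genuinely nonroutine step is the positivity and block-triangular structure of the $(2n-1)$-minors in step (c); once the cyclic bidiagonal shape is exploited, the rest (the Laplace/adjugate identity, the gcd normalization, and the appeal to Remark \ref{sec4rmk1}) is bookkeeping.
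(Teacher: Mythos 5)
Your argument is correct, and it reaches the formula by a genuinely different route than the paper. The paper starts from an arbitrary pure binomial $f_{\bf x}\in I_{\C_{2n}}$, uses Lemma \ref{sec3lem2} at each vertex to force the alternating sign pattern ${\bf x}=((-1)^{i+1}r_i)_{i=1}^{2n}$, solves the resulting bidiagonal recursion $a_{i,i-1}r_{i-1}=a_{i,i}r_i$ to express every $r_i$ in terms of $r_1$, chooses $r_1=M_{2n-1}(A(\C_{2n})[1|1])$, and finally checks the wrap-around equation $a_{1,1}r_1=a_{1,2n}r_{2n}$ using $\det(A(\C_{2n}))=0$; the one-dimensionality of the null space is cited from \cite{bklo} rather than proved. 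You instead propose the first-row cofactor vector as a candidate, verify $A(\C_{2n}){\bf v}=\det(A(\C_{2n}))\,\delta_{k,1}=0$ in one stroke via the Laplace and alien-cofactor expansions, and recover the sign pattern from the block-triangular positivity of the minors rather than from Lemma \ref{sec3lem2}; you also supply a self-contained proof that the nullity is one by exhibiting the nonzero minor $M_{2n-1}(A(\C_{2n})[1|1])$. Both proofs funnel through Remark \ref{sec4rmk1}(ii) at the end. Your version is shorter, makes the role of balancedness completely transparent (it is exactly the vanishing of the first-row cofactor expansion), and avoids the combinatorial sign-chasing; the paper's version has the advantage of explaining where the minor formula comes from by solving the system, and that recursion-solving template is what gets reused in Theorems \ref{sec4thm2}--\ref{sec4thm4}, where the graphs are no longer single cycles and a pure adjugate argument would not apply directly. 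The only points worth making explicit in a final write-up are that a pure binomial whose exponent vector lies in $\mbox{Null}(A)\cap\ZZ^{2n}$ belongs to the toric ideal (this is the defining description of $I_A$ recalled in Section \ref{sec2}), and that all entries $a_{i,i}, a_{i+1,i}$ are positive integers so the minors are indeed nonzero.
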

\begin{proof}
 Let $A({\C_{2n}})=[a_{i,j}]_{2n \times 2n}$ be the incidence matrix of $\C_{2n}$.   
The incidence matrix $A(\mathcal{C}_{2n})$ is of the following form:  $$\bordermatrix {\text{}&e_1&e_2&e_3&\ldots &e_{2n-1}&e_{2n} \cr  v_1&a_{1,1} &0&0&\ldots &0 &a_{1,2n} \cr v_2&a_{2,1} &a_{2,2}&0 &\ldots &0&0\cr v_3&0 &a_{3,2}&a_{3,3} &\ldots &0&0 \cr v_4&0 &0&a_{4,3} &\ldots &0&0 \cr \vdots&\vdots &\vdots&\vdots &\ldots &\vdots&\vdots \cr v_{2n-1}&0 &0&0&\ldots &a_{2n-1,2n-1}&0\cr v_{2n}&0 &0&0&\ldots &a_{2n,2n-1}&a_{2n,2n}}.$$ 
Let $f_{\bf x}\in I_{\C_{2n}}$ be a pure binomial. Then we have ${\bf x} \in Null(A(\C_{2n}))$. Without loss of generality, assume that $e_{1}\in\mbox{supp}(f_{\bf x}^{+})$. Then using the Lemma \ref{sec3lem2} applied to the vertex $v_2$, we get $e_2\in \mbox{supp}(f_{\bf x}^{-})$. Now repeatedly applying the Lemma \ref{sec3lem2}, we get that for $2\le i\le 2n$, $e_{i}\in\mbox{supp}(f_{\bf x}^{+})$ for $i$ odd and $e_{i}\in\mbox{supp}(f_{\bf x}^{-})$ for $i$ even. Thus ${\bf x}$ is of the form ${\bf x}=((-1)^{i+1}r_{i})_{i=1}^{2n}$, with $r_{i}\in{\NN}$. Then we have $A(\C_{2n}){\bf x}={\bf 0}$. This implies that  
\begin{equation} \label{eq2} 
\begin{aligned}
a_{1,1} r_{1} &=& a_{1,2n} r_{2n}, \text{ and }
 a_{i,i-1}r_{i-1} & =&  a_{i,i}r_{i}, \text{ for }  i=2 \ldots, 2n. 
\end{aligned}\end{equation} 
 Then from equations \eqref{eq2}, we get $r_{i}=\frac{\prod\limits_{k=2}^{i}a_{k,k-1}}{\prod\limits_{k=2}^{i}a_{k,k}}r_{1}$ for $2\le i\le 2n-1$, and $r_{2n}=\frac{\prod\limits_{k=2}^{2n}a_{k,k-1}}{\prod\limits_{k=2}^{2n}a_{k,k}}r_{1}=\frac{M_{2n,2n}}{M_{1,2n}}r_{1}$. Choose $r_{1}=M_{2n-1}(A(\C_{2n})[1|1])$ and substitute in the above expression of $r_i$ we get that $r_{i}=M_{2n-1}(A(\C_{2n})[1|i])$ for $i=2,3,\ldots,2n$. From the first equality of the equation \eqref{eq2} we get  $$a_{1,1}r_{1}=a_{1,1} 
 M_{2n-1}(A(\C_{2n})[1|1])=a_{1,1}\prod\limits_{i=2}^{2n}a_{i,i}=a_{1,2n}\prod\limits_{i=2}^{2n}a_{i,i-1}=a_{1,2n}M_{2n-1}(A(\C_{2n})[1|2n]),$$ 
 where the middle equality holds because $det(A(\C_{2n}))=0$, as ${\C_{2n}}$ is balanced. Thus we have 
 $a_{1,1}r_{1}=a_{1,2n}r_{2n}$. 
 Hence $r_{i}=M_{2n-1}(A(\C_{2n})[1|i])$ satisfies equations \eqref{eq2}. By the \cite[Remark 4.6]{bklo}, we have $\dim_{\QQ}\mbox{Null}(A({\C_{2n}}))=1$ and by using Remark \ref{sec4rmk1}, we get that $f_{\frac{1}{d}{\bf x}}$ is primitive and generates $I_{\C_{2n}}$. 
\end{proof}

\begin{example}
Let $\mathcal{C}_8$ be the weighted oriented graph such that the underlying graph is a cycle having edges $e_1=(v_2, v_1), e_2=(v_3, v_2), e_3=(v_4, v_3), e_4=(v_4, v_5), e_5=(v_5, v_6), e_6=(v_7, v_6), e_7=(v_8, v_7), e_8=(v_8, v_1)$ and weight vector, \textbf{w} = (4, 3, 2, 1, 36, 7, 6, 1). We compute the generator of toric ideal of ${\C_8}$. The incidence matrix of $A({\C_{8}})$ with respect to the usual labelling, is  
$$\begin{bmatrix}
   4 &0 &0 &0&0&0&0&4 \\
  1 &3 &0 &0&0&0&0&0 \\
  0 &1 &2 &0&0&0&0&0 \\
  0 &0 &1 &1&0&0&0&0 \\
  0 &0 &0 &36&1&0&0&0 \\
  0 &0 &0 &0&7&7&0&0 \\ 
  0 &0 &0 &0&0&1&6&0 \\ 
  0 &0 &0 &0&0&0&1&1
 \end{bmatrix}.$$ 
 Then by the notation as in the Theorem \ref{sec4thm1}, we have that $M_{7}(A({\C_8})[1|1])=252$, $M_{7}(A({\C_8})[1|2])=84, M_{7}(A({\C_8})[1|3])=42, M_{7}(A({\C_8})[1|4])=42, M_{7}(A({\C_8})[1|5])=1512, M_{7}(A({\C_8})[1|6])=1512, M_{7}(A({\C_8})[1|7])=252, M_{7}(A({\C_8})[1|8])=252$, and 
 $d=42$. Then by the Theorem \ref{sec4thm1}, we have $I_{\mathcal{C}_8} =\bigg({e_1^{}}^6 e_3^{}{e_5^{}}^{36}{e_7^{}}^6 - {e_2^{}}^2 {e_4^{}}{e_6^{}}^{36}{e_8^{}}^6\bigg)$. Note that Macaulay2 \cite{gs}, gives the same above computed generator of $I_D$.  
\end{example}

\begin{remark} 
If $D$ is the weighted oriented graph comprised of two oriented cycles sharing a vertex such that one cycle is balanced and other is unbalanced. Then 
by \cite[Theorem 5.1, Corollary 5.3]{bklo} we have that $I_D$ is principal and generated by the primitive binomial in $I_D$ corresponding to the balanced cycle in $D$. 
\end{remark}

\begin{notation} \label{sec4nota1}
 Let $D$ be a weighted oriented graph comprised of two cycles $\mathcal{C}_m, \mathcal{C}_n$ sharing a single vertex is labelled as in the below figure:
 \begin{figure}[h!] \centering \includegraphics[scale=0.5]{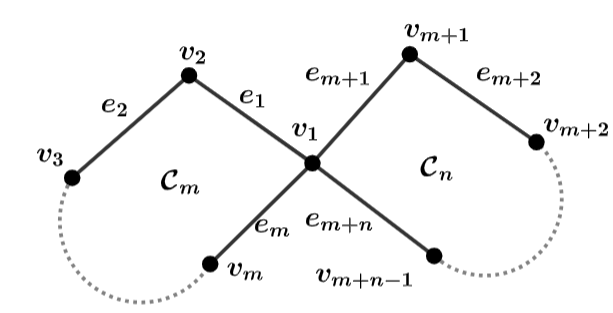}
\end{figure} 

Let $V(\mathcal{C}_m)=\{v_1,v_2, \ldots ,v_m\}$, $V(\mathcal{C}_n)=\{v_{1},v_{m+1}, \ldots ,v_{m+n-1}\}$ and the edge sets $E(\C_m)=\{e_1,e_2, \ldots, e_m\}$, $E(\C_n)=\{e_{m+1},e_{m+2}, \ldots, e_{m+n}\}$, where $e_i$ is incident with $v_i$ and $v_{i+1}$ for $i=1,2,\ldots, m$, $e_{m}$ is incident with $v_{m}$ and $v_{1}$, $e_{m+1}$ is incident with $v_{1}$ and $v_{m+1}$, $e_{m+i}$ is incident with $v_{m+i-1}$ and $v_{m+i}$ for $i=2,3,\ldots,n-1$, $e_{m+n}$ is incident with $v_{m+n}$ and $v_{1}$. Note that $A({\C_m})$, $A({\C_n})$ are submatrices of $A(D)$ with respect to the induced labelling from $D$.      
\end{notation}

\noindent 
Below we give an explicit formula for the primitive binomial generator of a weighted oriented graph comprised of two unbalanced cycles sharing a vertex in terms of the minors of its incidence matrix. 

\begin{theorem} \label{sec4thm2}
Let $D$ be a weighted oriented graph consisting  of two unbalanced cycles  $\mathcal{C}_m, \mathcal{C}_n$ such that these two cycles share only a single vertex as in Notation \ref{sec4nota1}. 
Then the toric ideal $I_D$ is generated by the primitive binomial $f_{\bf{c_{m}\cup{c_{n}}}}$, where ${\bf{c_{m}\cup c_{n}}}$ denotes the vector in $\ZZ^{m+n}$,
$${\bf{c_{m}\cup c_{n}}}=\frac{1}{d}(((-1)^{i+1}pM_{m-1}(A({\C_m})[1|i]))_{i=1}^{m}, ((-1)^{i}qM_{n-1}(A({\C_n})[1|i]))_{i=1}^{n}),$$ 
where $p=\mbox{det}(A({\C_n})), q=\mbox{det}(A({\C_m}))$ and \\ 
$d=\mbox{gcd}((|p|M_{m-1}(A({\C_m})[1|i]))_{i=1}^{m}, (|q|M_{n-1}(A({\C_n})[1|i]))_{i=1}^{n})$.   
\end{theorem}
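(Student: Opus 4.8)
The plan is to mimic the strategy of Theorem \ref{sec4thm1}, exploiting the fact that $\dim_{\QQ}\mbox{Null}(A(D))=1$ so that the problem reduces to exhibiting a single pure binomial in $I_D$ and then normalizing it via Remark \ref{sec4rmk1}. First I would record the shape of the incidence matrix $A(D)$ under the labelling of Notation \ref{sec4nota1}: it is block-structured, with the $\C_m$-block and the $\C_n$-block overlapping only in the row corresponding to the shared vertex $v_1$. Since each cycle is unbalanced, Theorem \ref{thm7} gives $\mbox{det}(A(\C_m))=q\neq 0$ and $\mbox{det}(A(\C_n))=p\neq 0$, and Lemma \ref{lem1} together with Proposition \ref{sec2pro1} forces any nonzero $f\in I_D$ to have support containing all edges of both cycles. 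Hence $\mbox{supp}(f_{\bf x})=E(\C_m)\cup E(\C_n)$ for any pure binomial $f_{\bf x}\in I_D$.

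Next I would determine the sign pattern of a generating vector by the same repeated application of Lemma \ref{sec3lem2} used in Theorem \ref{sec4thm1}. Walking around $\C_m$ starting from $e_1\in\mbox{supp}(f_{\bf x}^+)$ gives alternating signs $(-1)^{i+1}$ on the first $m$ coordinates, and walking around $\C_n$ gives the sign $(-1)^{i}$ on the last $n$ coordinates; the relative sign between the two blocks is fixed by the two edges incident to the shared vertex $v_1$. The core computation is then to solve $A(D){\bf x}={\bf 0}$. This splits into the two cycle-relations of the form in \eqref{eq2}, coupled only through the single equation at the row of $v_1$, which reads (schematically)
\begin{equation*}
a_{1,1}r_1 - a_{1,m}r_m + a_{1,m+1}r_{m+1} - a_{1,m+n}r_{m+n}=0.
\end{equation*}
Setting the $\C_m$-coordinates proportional to $M_{m-1}(A(\C_m)[1|i])$ and the $\C_n$-coordinates proportional to $M_{n-1}(A(\C_n)[1|i])$, the within-cycle relations are satisfied exactly as in Theorem \ref{sec4thm1}; scaling the first block by $p=\mbox{det}(A(\C_n))$ and the second by $q=\mbox{det}(A(\C_m))$ is precisely what is needed to reconcile the two blocks at the shared-vertex equation, since $a_{1,1}M_{m-1}(A(\C_m)[1|1])-a_{1,m}M_{m-1}(A(\C_m)[1|m])=\mbox{det}(A(\C_m))=q$ and the analogous $\C_n$-expression equals $p$, so the coupled equation collapses to $pq-qp=0$.

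After verifying that this vector lies in $\mbox{Null}(A(D))$, I would invoke \cite[Remark 4.6]{bklo} (or a direct rank count on the block matrix) to conclude $\dim_{\QQ}\mbox{Null}(A(D))=1$, and then apply Remark \ref{sec4rmk1}(ii) with $d=\mbox{gcd}((|p|M_{m-1}(A(\C_m)[1|i]))_{i=1}^m,(|q|M_{n-1}(A(\C_n)[1|i]))_{i=1}^n)$ to conclude that $f_{{\bf c_m}\cup{\bf c_n}}$ is primitive and generates $I_D$. The main obstacle I anticipate is the bookkeeping of signs and of the minor-expansion at the shared-vertex row: one must check that the cofactor expansions of $\mbox{det}(A(\C_m))$ and $\mbox{det}(A(\C_n))$ along the $v_1$-row produce exactly the factors $q$ and $p$ with the correct signs $(-1)^{i+1}$ and $(-1)^i$, so that the coupling equation genuinely cancels rather than merely being proportional. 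Getting that cancellation to come out to zero (using unbalancedness to guarantee $p,q\neq 0$ and hence that the vector is nonzero) is the crux; the remainder is the same normalization argument already established for the single-cycle case.
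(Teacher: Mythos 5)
Your proposal is correct and follows essentially the same route as the paper: determine the alternating sign pattern by repeated use of Lemma \ref{sec3lem2}, solve $A(D)\mathbf{x}=\mathbf{0}$ cycle by cycle with a single coupling equation at the shared vertex $v_1$, scale the $\C_m$-block by $p$ and the $\C_n$-block by $q$ so that the coupling equation cancels, and normalize via Remark \ref{sec4rmk1} using $\dim_{\QQ}\mbox{Null}(A(D))=1$. The only place the paper is more careful is the point you yourself flag as the crux: instead of asserting the $(-1)^{i}$ pattern on the $\C_n$-block outright, it leaves the relative sign of the two blocks as an unknown $(-1)^{\alpha}$ and derives $(-1)^{\alpha}=\mbox{sign}(q)/\mbox{sign}(p)$ from the $v_1$-row equation, which also carries the parity factors $(-1)^{m+1}$ and $(-1)^{n+1}$ that your schematic coupling equation suppresses.
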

\begin{proof}
Assume the notation as in the statement. By \cite[Theorem 5.1]{bklo}, we know that $I_{D}$ is principal. Let $f_{\bf x}\in I_{D}$ be a pure binomial. Then ${\bf x}\in Null(A(D))$. Without loss of generality, assume that $e_{1}\in\mbox{supp}(f_{\bf x}^{+})$. Then using the Lemma \ref{sec3lem2} repeatedly, we get that for $2\le i\le m$,  $e_{i}\in\mbox{supp}(f_{\bf x}^{+})$ for $i$ odd and $e_{i}\in\mbox{supp}(f_{\bf x}^{-})$ for $i$ even. Now, apply the Lemma \ref{sec3lem2} at the vertex $v_1$, then we get two possibilities: either $e_{m+1}\in\mbox{supp}(f_{\bf x}^{+})$ or $e_{m+1}\in\mbox{supp}(f_{\bf x}^{-})$. If $e_{m+1}\in\mbox{supp}(f_{\bf x}^{+})$, then by the Lemma \ref{sec3lem2} for $2\le i\le n$, $e_{m+i}\in\mbox{supp}(f_{\bf x}^{+})$ for $i$ odd and $e_{m+i}\in\mbox{supp}(f_{\bf x}^{-})$ for $i$ even. In this case ${\bf x}$ is of the form ${\bf x}={\bf x_{1}}=(((-1)^{i+1}r_{i})_{i=1}^{m},((-1)^{i+1}r_{m+i})_{i=1}^{n})$, for some $r_{i}\in{\NN}$. If $e_{m+1}\in\mbox{supp}(f_{\bf x}^{-})$, then by the Lemma \ref{sec3lem2} for $2\le i\le n$, $e_{m+i}\in\mbox{supp}(f_{\bf x}^{-})$ for $i$ odd and $e_{m+i}\in\mbox{supp}(f_{\bf x}^{+})$ for $i$ even. In this case ${\bf x}$ is of the form ${\bf x}={\bf x_{2}}=(((-1)^{i+1}r_{i})_{i=1}^{m},((-1)^{i}r_{m+i})_{i=1}^{n})$ for some  $r_{i}\in{\NN}$. Therefore in any case we can write ${\bf x}$ is of the form: ${\bf x}=(((-1)^{i+1}r_{i})_{i=1}^{m},((-1)^{i+\alpha}r_{m+i})_{i=1}^{n})$, for some $\alpha \in \NN$.        
Note that $A(D)$ matrix is of the below form \\ 
$$\begin{bmatrix}
   a_{1,1} &0 &\cdots &a_{1,m}&a_{1.m+1}& \cdots & a_{1,m+n} \\
   a_{2,1} & a_{2,2} & \cdots & 0 & 0& \cdots & 0 \\
   \vdots  & \vdots  & \ddots & \vdots &\vdots&\vdots&\vdots  \\
   0 & 0 & \cdots a_{m,m-1} & a_{m,m} &0 &\cdots &0 \\ 0 & 0 & \cdots  & 0 &a_{m+1,m+1} &a_{m+1,m+2} \cdots &0 \\
   \vdots  & \vdots  & \vdots & \vdots &\vdots&\vdots\ddots&\vdots \\ 0&0&\cdots&0&0&\cdots \;\;\;\;\cdots & a_{m+n-1,m+n} 
 \end{bmatrix}$$
Since $A(D){\bf x_{}}={\bf 0}$, we get   
\begin{equation} \label{eq1}  \begin{aligned}
&a_{1,1} r_{1} +(-1)^{m+1}a_{1,m}r_{m} +(-1)^{1+\alpha}a_{1,m+1}r_{m+1}+(-1)^{n+\alpha}a_{1,m+n}r_{m+n} =0\\
&a_{j,j}r_{j}=a_{j,j-1}r_{j-1}, \; \text{ and }\;a_{i,i}r_{i} =a_{i,i+1}r_{i+1},     
\end{aligned}\end{equation} 
for $j=2,3, \ldots, m$,\; and $i=m+1, m+2, \ldots, m+n-1$. Then from above equations, we get that 
$r_m= \frac{\prod\limits_{i=2}^{m}a_{i,i-1}}{\prod\limits_{i=2}^{m}a_{i,i}} r_1$ and $r_{m+n}=\frac{\prod\limits_{i=m+1}^{m+n-1}a_{i,i}}{\prod\limits_{i=m+1}^{m+n-1}a_{i,i+1}} r_{m+1}.$ Now substitute these in the first equation of \eqref{eq1}, we get that \\ 
$a_{1,1}r_{1}  +(-1)^{m+1} a_{1,m} \frac{\prod\limits_{i=2}^{m}a_{i,i-1}}{\prod\limits_{i=2}^{m}a_{i,i}} r_{1} + (-1)^{1+\alpha}a_{1,m+1}r_{m+1} +(-1)^{n+\alpha} a_{1,m+n} \frac{\prod\limits_{i=m+1}^{m+n-1}a_{i,i}}{\prod\limits_{i=m+1}^{m+n-1}a_{i,i+1}} r_{m+1} = 0$. This implies that 
$$\frac{\prod\limits_{k=1}^{m}a_{k,k} +(-1)^{m+1}a_{1,m}\prod\limits_{k=2}^{m}a_{k,k-1}}{\prod\limits_{k=2}^{m}a_{k,k}} r_{1}  = (-1)^{\alpha}\frac{a_{1,m+1}\prod\limits_{k=m+1}^{m+n-1}a_{k,k+1} +(-1)^{n+1}a_{1,m+n}\prod\limits_{k=m+1}^{m+n-1}a_{k,k}}{\prod\limits_{k=m+1}^{m+n-1}a_{k,k+1}}r_{m+1}.$$     
This gives that  $\frac{q}{\prod\limits_{k=2}^{m}a_{k,k}} r_{1} = (-1)^{\alpha} \frac{p}{\prod\limits_{k=m+1}^{m+n-1}a_{k,k+1}} r_{m+1}$. Now compare the signs on both sides we get that $(-1)^{\alpha}=\frac{\mbox{sign}(q)}{\mbox{sign}(p)}$. Therefore we have  $r_{m+1}=\frac{\prod\limits_{i=m+1}^{m+n-1}a_{i,i+1}}{\prod\limits_{i=2}^{m}a_{i,i}}\frac{|q|}{|p|} r_{1}=\frac{M_{n-1}(A({\C_n})[1|1])}{M_{m-1}(A({\C_m})[1|1])}\frac{|q|}{|p|}r_{1}$. From \eqref{eq1}, we get $r_{i}=\frac{\prod\limits_{k=2}^{i}a_{k,k-1}}{\prod\limits_{k=2}^{i}a_{k,k}}r_{1}$, and $r_{m+j}=\frac{\prod\limits_{k=m+1}^{m+j-1}a_{k,k}}{\prod\limits_{k=m+1}^{m+j-1}a_{k,k+1}}r_{m+1}$, for $2\le i\le m, $ and $2\le j\le n$. Choose $r_{1}=M_{m-1}(A({\C_m})[1|1])|p|$. Then we get $r_{i}=M_{m-1}(A({\C_m})[1|i])|p|, $ and $r_{m+j}=M_{n-1}(A({\C_n})[1|j])|q|$, for $2\le i \le m$, and $1\le j \le n$. 
Thus we have ${\bf x}=(((-1)^{i+1}M_{m-1}(A({\C_m})[1|i])|p|)_{i=1}^{m}, ((-1)^{i} \frac{sign(q)}{sign(p)}M_{n-1}(A({\C_n})[1|i])|q|)_{i=1}^{n})$. Then $\frac{1}{d}\mbox{sign}(p){\bf x}\in\mbox{Null}(A(D))$ is the required vector ${\bf c_{m}\cup c_{n}}$. . Now by \cite[Theorem 5.1]{bklo}, we get $\dim_{\QQ}\mbox{Null}(A({D}))=1$. Then by  Remark \ref{sec4rmk1}, the primitive binomial $f_{\frac{1}{d}({\mbox{sign}(p)\bf x_{}})}$ is the generator of $I_{D}$.    
\end{proof}

\begin{notation} \label{sec4nota3}
Let $D$ be a weighted oriented graph comprised of two cycles $\mathcal{C}_m, \mathcal{C}_n$ connected by a path $P$ of length $k$ labelled as shown in the below figure: 

\begin{figure}[h!] \centering \includegraphics[scale=0.5]{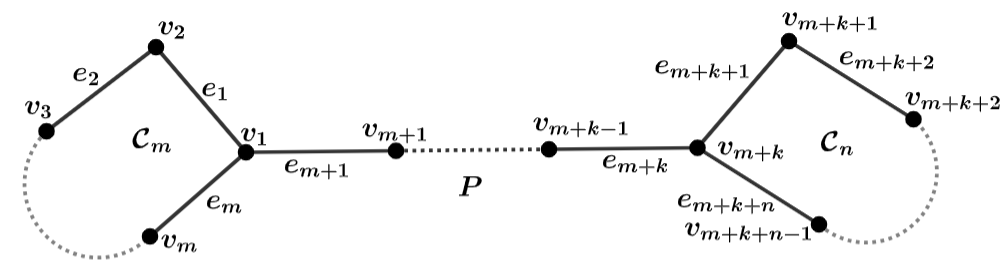}
\end{figure} 

Let $V({\C_m})=\{v_{1},\ldots,v_{m}\}, V(P)=\{v_{1},v_{m+1},\ldots,v_{m+k}\}, V({\C_n})=\{v_{m+k},v_{m+k+1},\ldots,v_{m+k+n-1}\}$, $E({\C_m})=\{e_{1},\ldots,e_{m}\}$, $E(P)=\{e_{m+1},\ldots,e_{m+k}\}$, $E({\C_n})=\{e_{m+k+1},\ldots,e_{m+k+n}\}$, $e_{i}$ is incident with $v_{i}$ and $v_{i+1}$ for $1\le i\le m-1$, $e_{m}$ is incident with $v_{m}$ and $v_{1}$, $e_{m+i}$ is incident with $v_{m+i-1}$ and $v_{m+i}$ for $1\le i\le k+n-1$, $e_{m+k+n}$ is incident with $v_{m+k+n-1}$ and $v_{m+k}$. Note that $A({\C_m}), A({\C_n}), A(P)$ are submatrices of $A(D)$ with respect to the induced labelling from $D$.      
\end{notation}

\noindent 
Below we give an explicit formula for the primitive binomial generator of a weighted oriented graph comprised of two unbalanced cycles connected by a path in terms of the minors of its incidence matrix. 

\begin{theorem} \label{sec4thm3}
Let $D$ be a weighted oriented graph consisting  of two unbalanced cycles  $\mathcal{C}_m, \mathcal{C}_n$ such that these two cycles are connected by a path $P$ of length $k$ as in Notation \ref{sec4nota3}. 
Then the toric ideal $I_D$ is generated by the primitive binomial $f_{\bf{c_{m}\cup p \cup {c_{n}}}}$, where ${\bf{c_{m}\cup p \cup c_{n}}}$ denotes the vector in $\ZZ^{m+k+n}$, 
$${\bf {c_{m}\cup p \cup c_{n}}}=\frac{1}{d}(((-1)^{i+1} p r_{i})_{i=1}^{m},((-1)^{i} pq r_{m+i})_{i=1}^{k},((-1)^{i+k} q r_{m+k+i})_{i=1}^{n}),$$ where
\begin{eqnarray*}
r_{i} &=& M_{k}(A(P)[k+1|\emptyset])M_{m-1}(A({\C_m})[1|i]),\; \text{ for }\;1\le i\le m, \\ 
r_{m+i}&=& M_{k-1}(A(P)[1,k+1|i]),\; \text{ for }\;1\le i\le k, \\ 
r_{m+k+i}&=& M_{k}(A(P)[1|\emptyset])M_{n-1}(A({\C_n})[1|i]),\; \text{ for }\;1\le i\le n,
\end{eqnarray*}
\hspace*{1ex}\;\;\;\;\;$q=\mbox{det}(A({\C_m})),p=\mbox{det}(A({\C_n})), d=\mbox{gcd}((|p|r_{i})_{i=1}^{m}, (|pq|r_{m+i})_{i=1}^k, (|q|r_{m+k+i})_{i=1}^n)$.
\end{theorem}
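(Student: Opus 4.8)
The plan is to mirror the strategy of Theorems \ref{sec4thm1} and \ref{sec4thm2}, now carried out on a graph with \emph{two} branch vertices rather than one. First I would invoke \cite[Theorem 5.1]{bklo} to record that $I_D$ is principal, so that $\dim_\QQ \mbox{Null}(A(D)) = 1$; this reduces the entire problem to exhibiting a single pure binomial $f_{\bf x} \in I_D$ whose exponent vector $\bf x$ is the stated vector up to scaling, after which Remark \ref{sec4rmk1} immediately upgrades $f_{\frac{1}{d}{\bf x}}$ to the primitive generator. I would then fix any pure binomial $f_{\bf x} \in I_D$, so ${\bf x}\in\mbox{Null}(A(D))$, and pin down its sign pattern: normalizing $e_1\in\mbox{supp}(f_{\bf x}^+)$ and applying Lemma \ref{sec3lem2} repeatedly around $\C_m$, then along $P$, then around $\C_n$, forces the alternating signs $(-1)^{i+1}$ on the $\C_m$-block, a sign on the path block, and $(-1)^{i+k}$ on the $\C_n$-block, exactly the pattern appearing in ${\bf c_m\cup p\cup c_n}$ once the two branch choices at $v_1$ and $v_{m+k}$ are fixed. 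As in Theorem \ref{sec4thm2}, each branch vertex introduces a $\pm$ ambiguity to be resolved at the end by comparing against $\mbox{sign}(p)$ and $\mbox{sign}(q)$.

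Next I would write out $A(D){\bf x}={\bf 0}$ row by row. The rows for the interior vertices of $\C_m$, of $P$, and of $\C_n$ are two-term equations $a_{i,i}r_i = a_{i,i-1}r_{i-1}$, which telescope exactly as in the proof of Theorem \ref{sec4thm1}: within $\C_m$ the ratios identify with $M_{m-1}(A(\C_m)[1|i])$, along $P$ with $M_{k-1}(A(P)[1,k+1|i])$, and within $\C_n$ with $M_{n-1}(A(\C_n)[1|i])$. The only genuinely new equations are the three-term relations at the two degree-$3$ branch vertices $v_1$ (incident to $e_1, e_m, e_{m+1}$) and $v_{m+k}$ (incident to the last path edge and the two $\C_n$-edges). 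Substituting the telescoped expressions into the $v_1$ relation collapses the $\C_m$-contribution into the factor $\det(A(\C_m))=q$, precisely the quantity that vanished in Theorem \ref{sec4thm1} because the cycle was balanced but which is now nonzero; likewise the $v_{m+k}$ relation produces $\det(A(\C_n))=p$. These two relations couple the free scalars of the three blocks, and solving the coupling shows that the $\C_m$-entries must be scaled by $p$, the $\C_n$-entries by $q$, and the path entries by $pq$, with the path-endpoint minors $M_k(A(P)[k+1|\emptyset])$ and $M_k(A(P)[1|\emptyset])$ multiplying the two cycle blocks respectively.

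Finally I would resolve the global sign. Comparing the signs forced on the two branch equations against the signs of the telescoped products yields the displayed powers of $-1$ in terms of $\mbox{sign}(p)$ and $\mbox{sign}(q)$, exactly as the factor $(-1)^\alpha=\mbox{sign}(q)/\mbox{sign}(p)$ was extracted in Theorem \ref{sec4thm2}; this confirms that $\bf x$ is a scalar multiple of ${\bf c_m\cup p\cup c_n}$. Dividing by $d=\mbox{gcd}((|p|r_i)_{i=1}^m,(|pq|r_{m+i})_{i=1}^k,(|q|r_{m+k+i})_{i=1}^n)$ and appealing to Remark \ref{sec4rmk1} then finishes the proof. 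I expect the main obstacle to be the bookkeeping at the two branch vertices: getting the $p$, $q$, $pq$ scalings and the matching path-endpoint minors to drop out consistently, and carrying the two independent sign ambiguities through so that the final powers of $-1$ agree with the statement. The telescoping inside each cycle and along the path is routine once Theorem \ref{sec4thm1} is in hand; it is the coupling and sign reconciliation across the two branches where care is required.
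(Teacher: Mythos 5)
Your proposal follows essentially the same route as the paper's proof: principality and support via \cite[Theorem 5.1, Corollary 5.3]{bklo}, the sign pattern from repeated use of Lemma \ref{sec3lem2} with two ambiguities $\alpha,\beta$ at the branch vertices $v_1$ and $v_{m+k}$, telescoping the two-term rows, extracting $q=\det(A(\C_m))$ and $p=\det(A(\C_n))$ from the two three-term rows to get the $p$, $pq$, $q$ scalings and the path-endpoint minors, and concluding with $\dim_\QQ\mbox{Null}(A(D))=1$ and Remark \ref{sec4rmk1}. The remaining work is exactly the bookkeeping you flag, which the paper carries out and which goes through as you anticipate (in particular $(-1)^{\alpha}=\mbox{sign}(q)$ and $(-1)^{\beta}=(-1)^{k}\mbox{sign}(q)/\mbox{sign}(p)$, yielding the stated exponents of $-1$).
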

\begin{proof} 
Assume the notation as in the statement. By \cite[Theorem 5.1]{bklo}, we have that $I_{D}$ is principal and by \cite[Corollary 5.3]{bklo}, we have that support of the generator of $I_{D}$ is equal to $E(D)$. Let $f_{\bf x}\in I_{D}$ be a pure binomial such that $\mbox{supp}(f_{\bf x})=E(D)$. Then ${\bf x} \in Null(A(D))$. Without loss of generality, assume that $e_{1}\in\mbox{supp}(f_{\bf x}^{+})$. Then using the Lemma \ref{sec3lem2} repeatedly, we get that for $2\le i\le m$, $e_{i}\in\mbox{supp}(f_{\bf x}^{+})$ for $i$ odd and $e_{i}\in\mbox{supp}(f_{\bf x}^{-})$ for $i$ even. Now apply the Lemma \ref{sec3lem2} at the vertex $v_1$, then we get two possibilities: either $e_{m+1}\in\mbox{supp}(f_{\bf x}^{+})$ or $e_{m+1}\in\mbox{supp}(f_{\bf x}^{-})$. In either possibility, we apply the Lemma \ref{sec3lem2} repeatedly, we get that $e_{m+i}$ belonging to $\mbox{supp}(f_{\bf x}^{+})$ or $\mbox{supp}(f_{\bf x}^{-})$ for $1\le i\le k$ with $i$ odd or even. Again apply the Lemma \ref{sec3lem2} at the vertex $x_{m+k+1}$, we get two possibilities: either $e_{m+k+1}\in\mbox{supp}(f_{\bf x}^{+})$ or $e_{m+k+1}\in\mbox{supp}(f_{\bf x}^{-})$. In either of the possibilities, by using the Lemma \ref{sec3lem2}, we get that $e_{m+k+i}$ belonging to $\mbox{supp}(f_{\bf x}^{+})$ or $\mbox{supp}(f_{\bf x}^{-})$ for $2\le i\le n$ with $i$ odd or even respectively. Thus the vector ${\bf x}$ is of the form ${\bf x}=(((-1)^{i+1}r_{i})_{i=1}^{m},((-1)^{i+\alpha}r_{m+i})_{i=1}^{k},((-1)^{i+\beta}r_{m+k+i})_{i=1}^{n})$, for some $\alpha,\beta\in{\NN}$ and $r_{i}\in{\NN}$.
Since $A(D){\bf x_{}}={\bf 0}$, we get
\begin{equation} \label{eq3} 
\begin{aligned}
&a_{1,1} r_{1} +(-1)^{m+1}a_{1,m}r_{m} +(-1)^{1+\alpha}a_{1,m+1}r_{m+1} =0,
\end{aligned}\end{equation}
\begin{equation} \label{eq4}  
\begin{aligned}
&a_{i,i-1}r_{i-1}=a_{i,i}r_{i}, \;\;\;a_{j,j}r_{j} =a_{j,j+1}r_{j+1},\; \mbox{for} \;2\le i\le m, m+1\le j\le m+k-1,
\end{aligned}\end{equation}
\begin{equation} \label{eq5}  \begin{aligned}
&(-1)^{k+\alpha}a_{m+k,m+k} r_{m+k} +(-1)^{1+\beta}a_{m+k,m+k+1}r_{m+k+1} +(-1)^{n+\beta}a_{m+k,m+k+n}r_{m+k+n} =0,
\end{aligned}\end{equation}
\begin{equation} \label{eq6}  \begin{aligned}
&a_{l,l}r_{l}=a_{l,l+1}r_{l+1}\; \mbox{for}\;m+k+1\le l\le m+k+n-1.
\end{aligned}\end{equation} 
From equation \eqref{eq4}, we get that $r_m= \frac{\prod\limits_{i=2}^{m}a_{i,i-1}}{\prod\limits_{i=2}^{m}a_{i,i}}r_{1}$ and substituting this in the equation \eqref{eq3}, we get that  $a_{1,1}r_{1}+(-1)^{m+1} a_{1,m}\frac{\prod\limits_{i=2}^{m}a_{i,i-1}}{\prod\limits_{i=2}^{m}a_{i,i}}r_{1}+(-1)^{1+\alpha}a_{1,m+1}r_{m+1}=0$. This implies that 
\begin{equation} \label{eq7}
\frac{\mbox{det}(A({\C_m}))}{\prod\limits_{i=2}^{m}a_{i,i}}r_{1} = (-1)^{\alpha}a_{1,m+1}r_{m+1}. 
\end{equation}
Now compare the sign both sides we get that $(-1)^{\alpha}=\mbox{sign}(\mbox{det}(A({\C_m})))$. 
From the equation \eqref{eq6}, we get $r_{m+k+n}=\frac{\prod\limits_{i=m+k+1}^{m+k+n-1}a_{i,i}}{\prod\limits_{i=m+k+1}^{m+k+n-1}a_{i,i+1}}r_{m+k+1}$ and substitute this in the equation \eqref{eq5}, we get

$$(-1)^{k+\alpha}a_{m+k,m+k}r_{m+k}+(-1)^{1+\beta}a_{m+k,m+k+1}r_{m+k+1} +(-1)^{n+\beta}a_{m+k,m+k+n}\frac{\prod\limits_{i=m+k+1}^{m+k+n-1}a_{i,i}}{\prod\limits_{i=m+k+1}^{m+k+n-1}a_{i,i+1}}r_{m+k+1}=0.$$ 
This implies that 
$$(-1)^{k+\alpha}a_{m+k,m+k}r_{m+k}+(-1)^{1+\beta}\frac{\prod\limits_{i=m+k}^{m+k+n-1}a_{i,i+1}+(-1)^{n+1}a_{m+k,m+k+n}\prod\limits_{i=m+k+1}^{m+k+n-1}a_{i,i}}{\prod\limits_{i=m+k+1}^{m+k+n-1}a_{i,i+1}}r_{m+k+1}=0.$$
This gives that $(-1)^{k+\alpha}a_{m+k,m+k}r_{m+k} = (-1)^{\beta}\frac{\mbox{det}(A({\C_n}))}{\prod\limits_{i=m+k+1}^{m+k+n-1}a_{i,i+1}}r_{m+k+1}$. From the equation \eqref{eq4}, we get  $r_{m+k}=\frac{\prod\limits_{i=m+1}^{m+k-1}a_{i,i}}{\prod\limits_{i=m+1}^{m+k-1}a_{i,i+1}}r_{m+1}$ and substitute this in the above equation, we get that 
$$(-1)^{k+\alpha}a_{m+k,m+k}\frac{\prod\limits_{i=m+1}^{m+k-1}a_{i,i}}{\prod\limits_{i=m+1}^{m+k-1}a_{i,i+1}} r_{m+1} = (-1)^{\beta}\frac{\mbox{det}(A({\C_n}))}{\prod\limits_{i=m+k+1}^{m+k+n-1}a_{i,i+1}}r_{m+k+1}.$$
By the equation \eqref{eq7}, this implies that 
 $$(-1)^{k+\alpha}a_{m+k,m+k}\frac{\prod\limits_{i=m+1}^{m+k-1}a_{i,i}}{\prod\limits_{i=m+1}^{m+k-1}a_{i,i+1}} \frac{\mbox{det}(A({\C_m}))}{\prod\limits_{i=2}^{m}a_{i,i}} r_{1}(-1)^{\alpha} = (-1)^{\beta}\frac{\mbox{det}(A({\C_n}))}{\prod\limits_{i=m+k+1}^{m+k+n-1}a_{i,i+1}}r_{m+k+1}.$$ 
 Now compare the sign both sides we get that $(-1)^{\beta}=(-1)^{k}\frac{\mbox{sign}(\mbox{det}(A({\C_m})))}{\mbox{sign}(\mbox{det}(A({\C_n})))}=(-1)^{k}\frac{\mbox{sign}(q)}{\mbox{sign}(p)}$. 
  Therefore we have $r_{1}=\frac{|p| a_{1,m+1}\prod\limits_{i=m+1}^{m+k-1}a_{i,i+1}\prod\limits_{i=2}^{m}a_{i,i}}{|q|\prod\limits_{i=m+1}^{m+k}a_{i,i}\prod\limits_{i=m+k+1}^{m+k+n-1}a_{i,i+1}}r_{m+k+1}=\frac{|p|M_{k}(A(P)[k+1|\emptyset])M_{m-1}(A({\C_m})[1|1])} {|q|M_{k}(A(P)[1|\emptyset])M_{n-1}(A({\C_n})[1|1])}r_{m+k+1}$. 
 Choose $r_{m+k+1}=|q|M_{k}(A(P)[1|\emptyset])M_{n-1}(A({\C_n})[1|1])$. Then the above equality gives that  $r_{1}=|p|M_{k}(A(P)[k+1|\emptyset])M_{m-1}(A({\C_m})[1|1])$. Since $r_{m+k}=\frac{|p|}{a_{m+k,m+k}\prod\limits_{i=m+k+1}^{m+k+n-1}a_{i,i+1}}r_{m+k+1}$, substitute the value of $r_{m+k+1}$ in it, we get that $r_{m+k}=|p||q|M_{k-1}(A(P)[1,k+1|k])$. Now using the equations \eqref{eq4},\eqref{eq6} and the above computed values of $r_{1}, r_{m+k}, r_{m+k+1}$, one can check that 
 \begin{eqnarray*}
 r_{i} &=& |p| M_{k}(A(P)[k+1|\emptyset])M_{m-1}(A({\C_m})[1|i]),\; \text{ for }\;2\le i\le m, \\ 
r_{m+i} &=& |p||q| M_{k-1}(A(P)[1,k+1|i]),\; \text{ for }\;1\le i\le k-1, \\  
r_{m+k+i} &=& |q| M_{k}(A(P)[1|\emptyset])M_{n-1}(A({\C_n})[1|i]), \; \text{ for }\;2\le i\le n. 
\end{eqnarray*}
Then $\frac{1}{d}\mbox{sign}(p){\bf x}\in\mbox{Null}(A(D))$ is the required vector ${\bf{c_{m}\cup p \cup c_{n}}}$. By \cite[Theorem 5.1]{bklo}, we have $\dim_{\QQ}\mbox{Null}(A({D}))=1$ and by the Remark \ref{sec4rmk1}, we have $f_{\frac{1}{d}({\mbox{sign}(p)\bf x_{}})}$ is a primitive binomial and it generates $I_{D}$. 
\end{proof}

\begin{notation} \label{sec4nota4}
Let $D$ be a weighted oriented graph comprised of two cycles $\C_m, \C_n$ such that these two cycles share a path $P$ of length $k$ labelled as shown in the below figure: 

\begin{figure}[h!] 
\centering \includegraphics[scale=0.5]{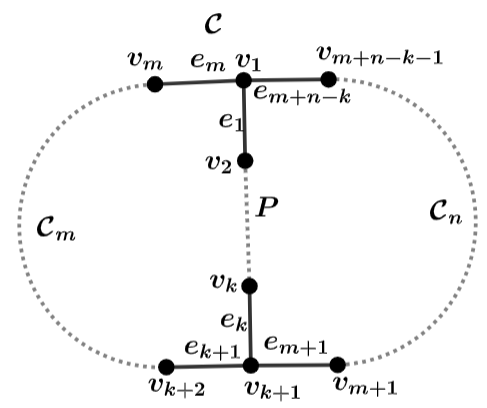}
\end{figure}

Let $V(\mathcal{C}_{m})=\{v_1, v_2, \ldots,v_{k+1}, \ldots  v_{m}\}$,  $V(\mathcal{C}_{n})=\{v_1, v_2, \ldots,v_{k+1},v_{m+1} \ldots,  v_{m+n-k-1}\}$, $V(P)=\{v_1,v_{2}, \ldots, v_{k+1}\}$, $E(\mathcal{C}_{m})=\{e_1, e_2, \ldots,e_{k},e_{k+1} \ldots  e_{m}\}$,  $E(\mathcal{C}_{n})=\{e_1, e_2, \ldots,e_{k},\\e_{m+1} \ldots,  e_{m+n-k}\}$, $E(P)=\{e_1,e_{2}, \ldots, e_{k}\}$. 
Let ${\C}$ be the induced cycle of $D$, whose edge set is given by $E({\C})=(E({\C_m})\cup E({\C_n}))\setminus E(P)$ and $A({\C})$ be the incidence matrix of ${\C}$ with respect to usual labelling where $v_{1}$ corresponds to first row and $v_{m+n-k-1}$ corresponds to last row of $A({\C})$. Note that $A({\C_m}), A({\C_n}), A(P)$ are submatrices of $A(D)$ with respect to the induced labelling from $D$. Note that ${\C_m}\setminus P$, ${\C_n}\setminus P$ denote paths where $E({\C_m}\setminus P)=E({\C_m})\setminus E(P)$, $E({\C_n}\setminus P)=E({\C_n})\setminus E(P)$. We get $V({\C_m}\setminus P)=\{v_{k+1},v_{k+2},\ldots,v_{m},v_{1}\}$, $V({\C_n}\setminus P)=\{v_{k+1},v_{m+1},\ldots,v_{m+n-k-1},v_{1}\}$.   
Let $A({\C_m}\setminus P),\;A({\C_n}\setminus P) $ denote the incidence matrices of ${\C_m}\setminus P,\; {\C_n}\setminus P$ respectively with respect to usual labelling where $v_{k+1}, v_{1}$ corresponds to first row, last row of $A({\C_m}\setminus P)$ and $A({\C_n}\setminus P)$ respectively.      
\end{notation}

\begin{remark}
If $D$ is weighted oriented graph comprised of two unbalanced cycles ${\C_m}, {\C_n}$ share a path as in Notation \ref{sec4nota4} and if the outer cycle ${\C}$ is balanced, then $I_{D}=(f_{\bf c})$, by \cite[Theorem 5.1, Corollary 5.3]{bklo}, where $f_{\bf c}$ is the primitive binomial generator of $I_{\C}$. 
\end{remark}

\noindent 
Below we prove an explicit formula for the primitive binomial generator of a weighted oriented graph comprised of two unbalanced cycles sharing a path of length $\geq 1$ for which the outer cycle is unbalanced, in terms of the minors of its incidence matrix. 

\begin{theorem} \label{sec4thm4}
Let $D$ be a weighted oriented graph comprised of two unbalanced cycles $\C_{m}, \C_{n}$ sharing a path of length $k\geq 1$ as in the Notation \ref{sec4nota4}. Suppose the outer cycle $\C$ is unbalanced. 
Then the toric ideal $I_D$ is generated by the primitive binomial $f_{\bf{c_{m}p{c_{n}}}}$, where ${\bf{c_{m}pc_{n}}}$ denotes the vector in $\ZZ^{m+n-k}$, 
$${\bf {c_{m}pc_{n}}}=\frac{1}{d}\Bigg (((-1)^{i+1}sr_{i})_{i=1}^{k},((-1)^{i+m-k+1}pr_{k+i})_{i=1}^{m-k},((-1)^{i+m-k}qr_{m+i})_{i=1}^{n-k} \Bigg ),$$
\begin{eqnarray*}
\text{ where, } 
r_{i} &=& M_{k-1}(A(P)[1,k+1|i]),\; \text{ for }\;1\le i\le k, \\ 
r_{k+i}&=& M_{m-k-1}(A({\C_m}\setminus P)[1,m-k+1|i]),\; \text{ for }\;1\le i\le m-k, \\ 
r_{m+i}&=& M_{n-k-1}(A({\C_n}\setminus P)[1,n-k+1|i]),\; \text{ for }\;1\le i\le n-k,
\end{eqnarray*}
\hspace*{1ex}$q=\mbox{det}(A({\C_m})),p=\mbox{det}(A({\C_n})),s=\mbox{det}(A({\C})), d=\mbox{gcd}((|s|r_{i})_{i=1}^{k},(|p|r_{k+i})_{i=1}^{m-k}, (|q|r_{m+i})_{i=1}^{n-k})$. 
\end{theorem}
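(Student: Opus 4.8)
The plan is to pin down the one-dimensional space $\mbox{Null}(A(D))$ explicitly and read off its unique (up to scalar) pure binomial. By \cite[Theorem 5.1]{bklo} the ideal $I_D$ is principal, so $\dim_{\QQ}\mbox{Null}(A(D))=1$, and by \cite[Corollary 5.3]{bklo} the support of its generator is all of $E(D)$; since $\C_m,\C_n,\C$ are unbalanced, Theorem \ref{thm7} gives $q=\det(A(\C_m))\neq 0$, $p=\det(A(\C_n))\neq 0$ and $s=\det(A(\C))\neq 0$, which is precisely what forces every segment to lie in the support (this is where the hypothesis that the outer cycle is unbalanced enters). I would fix a pure binomial $f_{\bf x}\in I_D$ and, exactly as in Theorems \ref{sec4thm1}--\ref{sec4thm3}, normalize by assuming $e_1\in\mbox{supp}(f_{\bf x}^{+})$ and propagate signs by repeated use of Lemma \ref{sec3lem2}. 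Along the shared path $P$ the interior vertices $v_2,\ldots,v_k$ have degree two, so the signs alternate and $[{\bf x}]_i=(-1)^{i+1}r_i$ for $1\le i\le k$; at the two branch vertices $v_1$ and $v_{k+1}$ (each of degree three) the pattern splits into the two arms $\C_m\setminus P$ and $\C_n\setminus P$, producing on each arm an alternating sign with an a priori unknown offset, say $(-1)^{i+\alpha}r_{k+i}$ and $(-1)^{i+\beta}r_{m+i}$.

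The core of the argument is a theta-graph linear-algebra computation. The interior-vertex equations of $A(D){\bf x}={\bf 0}$ split into three independent blocks, one per segment; each block is the homogeneous system obtained by deleting the two endpoint rows of the corresponding path incidence matrix. Such a block is a full-rank $\ell\times(\ell+1)$ system whose kernel is spanned by the vector of signed maximal minors, so on $P$ one gets $r_i=M_{k-1}(A(P)[1,k+1|i])$, on $\C_m\setminus P$ one gets $r_{k+i}=M_{m-k-1}(A(\C_m\setminus P)[1,m-k+1|i])$, and on $\C_n\setminus P$ one gets $r_{m+i}=M_{n-k-1}(A(\C_n\setminus P)[1,n-k+1|i])$, each determined up to one scalar $\lambda_P,\lambda_m,\lambda_n$. (In the hands-on style of the earlier proofs this is the same as writing each $r_i$ as a telescoping product of the ratios $a_{j,j-1}/a_{j,j}$ supplied by the degree-two vertices and recognizing the product as the stated minor.) What then remains are the two balance equations at $v_1$ and $v_{k+1}$, giving a $2\times 3$ homogeneous system in $(\lambda_P,\lambda_m,\lambda_n)$ whose solution, by Cramer, is proportional to the cross product of its two rows, that is, to its three $2\times 2$ minors.

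I expect the crucial identification --- and the main obstacle --- to be showing that these three $2\times 2$ minors are exactly the cycle determinants, up to the common endpoint-minor factors. The three cycles of the theta graph are the three unions of pairs of arms: the minor built from the $\C_m\setminus P$ and $\C_n\setminus P$ columns is $\det(A(\C))=s$ and gives $\lambda_P$; the minor from the $P$ and $\C_n\setminus P$ columns is $\det(A(\C_n))=p$ and gives $\lambda_m$; the minor from the $P$ and $\C_m\setminus P$ columns is $\det(A(\C_m))=q$ and gives $\lambda_n$. The clean way to see this is a Laplace expansion of each cycle determinant along its branch-vertex rows, factoring it as (a branch-vertex incidence entry)$\times$(endpoint path minors) $\pm$ (the complementary term); carrying out this expansion carefully is the one genuinely fiddly computation, and it is also where all the signs are generated. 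Consequently $\lambda_P\propto s$, $\lambda_m\propto p$, $\lambda_n\propto q$, which matches the claimed scalings $sr_i$, $pr_{k+i}$, $qr_{m+i}$ (note the same "cross'' pairing of $\C_m\setminus P$ with $\det(A(\C_n))$ seen in Theorems \ref{sec4thm2} and \ref{sec4thm3}).

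Finally I would pin down the signs. Comparing the signs of the two sides of each branch equation --- exactly as relations of the form $(-1)^{\alpha}=\mbox{sign}(q)/\mbox{sign}(p)$ were extracted in Theorems \ref{sec4thm2} and \ref{sec4thm3} --- forces the offsets $\alpha,\beta$ and yields the exponents $(-1)^{i+1}$, $(-1)^{i+m-k+1}$, $(-1)^{i+m-k}$, with the remaining factors $\mbox{sign}(s),\mbox{sign}(p),\mbox{sign}(q)$ absorbed into $s,p,q$. Dividing the resulting integer vector by $d=\mbox{gcd}((|s|r_i),(|p|r_{k+i}),(|q|r_{m+i}))$ and invoking Remark \ref{sec4rmk1}(ii), which applies since $\dim_{\QQ}\mbox{Null}(A(D))=1$, shows that $f_{\bf{c_{m}pc_{n}}}$ is primitive and generates $I_D$. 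The delicate points throughout are the Laplace-expansion identity linking the branch minors to $s,p,q$ and the consistent sign bookkeeping across the two branch vertices; the rest follows the template of the preceding theorems.
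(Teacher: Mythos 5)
Your proposal is correct and follows essentially the same route as the paper: propagate signs with Lemma \ref{sec3lem2}, use the degree-two vertex equations to express each segment's entries as the signed maximal minors of its path block (the telescoping products $M_{k-1}(A(P)[1,k+1|i])$, etc.), solve the remaining $2\times 3$ system at the branch vertices $v_1,v_{k+1}$ by Cramer to get the proportionality constants $s,p,q$ with the same ``cross'' pairing, fix $\alpha,\beta$ by comparing signs, and finish with Remark \ref{sec4rmk1} since $\dim_{\QQ}\mbox{Null}(A(D))=1$. The paper's equation \eqref{eq12} is exactly the Cramer/cross-product identity you flag as the crucial step, so no substantive difference remains.
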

\begin{proof}
We know that $I_{D}$ is principal by \cite[Theorem 5.1]{bklo} and by \cite[Corollary 5.3]{bklo}, we have that the support of the generator of $I_{D}$ is equal to $E(D)$. Let $f_{\bf x}\in I_{D}$ be a pure binomial such that $\mbox{supp}(f_{\bf x})=E(D)$ with ${\bf x} \in Null(A(D))$. Without loss of generality, assume that $e_{1}\in\mbox{supp}(f_{\bf x}^{+})$. Then using the Lemma \ref{sec3lem2} repeatedly, we get that for $2\le i\le k$, $e_{i}\in\mbox{supp}(f_{\bf x}^{+})$ for $i$ odd and $e_{i}\in\mbox{supp}(f_{\bf x}^{-})$ for $i$ even. Now at the vertex $v_{k+1}$, we get possibilities: (either $e_{k+1}\in\mbox{supp}(f_{\bf x}^{+})$ or $e_{k+1}\in\mbox{supp}(f_{\bf x}^{-})$) and (either  $e_{m+1}\in\mbox{supp}(f_{\bf x}^{+})$ or $e_{m+1}\in\mbox{supp}(f_{\bf x}^{-})$). If $e_{k+1}\in\mbox{supp}(f_{\bf x}^{+})$ (resp. $e_{k+1}\in\mbox{supp}(f_{\bf x}^{-})$), then we apply the Lemma \ref{sec3lem2} repeatedly, we get that for $1\le i\le m-k$, $e_{k+i}$ belongs to $\mbox{supp}(f_{\bf x}^{+})$ (resp. $\mbox{supp}(f_{\bf x}^{-})$) if $i$ is odd and $e_{k+i}$ belongs to $\mbox{supp}(f_{\bf x}^{-})$ (resp. $\mbox{supp}(f_{\bf x}^{+})$) if $i$ is even. In the  similar line of arguments, we can have $e_{m+i}$ belonging to $\mbox{supp}(f_{\bf x}^{+})$ or $\mbox{supp}(f_{\bf x}^{-})$, for $1\le i\le n-k$. Thus the vector ${\bf x}$ is of the form ${\bf x}=(((-1)^{i+1}r_{i})_{i=1}^{k},((-1)^{i+\alpha}r_{k+i})_{i=1}^{m-k},((-1)^{i+\beta}r_{m+i})_{i=1}^{n-k})$, for some $\alpha,\beta\in{\NN}$ and $r_{i}\in{\NN}$.
Since $A(D){\bf x_{}}={\bf 0}$, we get
\begin{equation} \label{eq8} 
\begin{aligned}
&a_{1,1} r_{1} +(-1)^{m-k+\alpha}a_{1,m}r_{m} +(-1)^{n-k+\beta}a_{1,m+n-k}r_{m+n-k} =0,
\end{aligned}\end{equation}
\begin{equation} \label{eq9}  
\begin{aligned}
&a_{i,i-1}r_{i-1}=a_{i,i}r_{i}, \; \mbox{for} \;2\le i\le k, 
\end{aligned}\end{equation}
\begin{equation} \label{eq10}  \begin{aligned}
&(-1)^{k+1}a_{k+1,k} r_{k} +(-1)^{1+\alpha}a_{k+1,k+1}r_{k+1} +(-1)^{1+\beta}a_{k+1,m+1}r_{m+1} =0,
\end{aligned}\end{equation}
\begin{equation} \label{eq11}  \begin{aligned}
&a_{i,i}r_{i}=a_{i,i-1}r_{i-1},\;a_{j,j}r_{j}=a_{j,j+1}r_{j+1},\;\mbox{for}\;k+2\le i\le m, \text{and } m+1\le i\le m+n-k-1.
\end{aligned}\end{equation} 
From the equations \eqref{eq9} and \eqref{eq11}, we get that $r_k= \frac{\prod\limits_{i=2}^{k}a_{i,i-1}}{\prod\limits_{i=2}^{k}a_{i,i}}r_{1}$, $r_m= \frac{\prod\limits_{i=k+2}^{m}a_{i,i-1}}{\prod\limits_{i=k+2}^{m}a_{i,i}}r_{k+1}$, and $r_{m+n-k}= \frac{\prod\limits_{i=m+1}^{m+n-k-1}a_{i,i}}{\prod\limits_{i=m+1}^{m+n-k-1}a_{i,i+1}}r_{m+1}$. Substituting these expressions of $r_{m}, r_{m+n-k}$ in the equation \eqref{eq8}, we get that 
\begin{equation*} \begin{aligned}
&a_{1,1}r_{1} +(-1)^{m-k+\alpha} a_{1,m} \frac{\prod\limits_{i=k+2}^{m}a_{i,i-1}}{\prod\limits_{i=k+2}^{m}a_{i,i}}r_{k+1} +(-1)^{n-k+\beta}a_{1,m+n-k}\frac{\prod\limits_{i=m+1}^{m+n-k-1}a_{i,i}}{\prod\limits_{i=m+1}^{m+n-k-1}a_{i,i+1}}r_{m+1}=0, 
\end{aligned}
\end{equation*}
and substitute the above expression of $r_k$ in the equation \eqref{eq10}, we get that 
\begin{equation*} \begin{aligned} 
 &(-1)^{k+1}a_{k+1,k}\frac{\prod\limits_{i=2}^{k}a_{i,i-1}}{\prod\limits_{i=2}^{k}a_{i,i}}r_{1} +(-1)^{1+\alpha}a_{k+1,k+1}r_{k+1} + (-1)^{1+\beta}a_{k+1,m+1}r_{m+1}=0.
 \end{aligned}
\end{equation*}
Solve the last two equations in the unknowns $r_1,r_{k+1}, r_{m+1}$, we get that
$$\frac{r_{1}\prod\limits_{i=k+2}^{m}a_{i,i}\prod\limits_{i=m+1}^{m+n-k-1}a_{i,i+1}}{(-1)^{\alpha+\beta}(-1)^{m-k+1}\mbox{det}(A({\C}))} = \frac{r_{k+1}\prod\limits_{i=2}^{k}a_{i,i}\prod\limits_{i=m+1}^{m+n-k-1}a_{i,i+1}}{(-1)^{\beta}\mbox{det}(A({\C_n}))}=\frac{r_{m+1}\prod\limits_{i=2}^{k}a_{i,i}\prod\limits_{i=k+2}^{m}a_{i,i}}{(-1)^{1+\alpha}\mbox{det}(A({\C_m}))}.$$ 
This implies that
\begin{equation} \label{eq12} \begin{aligned}
\frac{r_{1}}{(-1)^{\alpha+\beta}(-1)^{m-k+1}\prod\limits_{i=2}^{k}a_{i,i}\mbox{det}(A({\C}))} &= \frac{r_{k+1}}{(-1)^{\beta}\prod\limits_{i=k+2}^{m}a_{i,i}\mbox{det}(A({\C_n}))} \\ 
&= \frac{r_{m+1}}{(-1)^{1+\alpha}\prod\limits_{i=m+1}^{m+n-k-1}a_{i,i+1}\mbox{det}(A({\C_m}))}.
\end{aligned} \end{equation} 
Now compare the signs we get that $(-1)^{\alpha}=(-1)^{m-k+1}\frac{\mbox{sign}(\mbox{det}(A({\C})))}{\mbox{sign}(\mbox{det}(A({\C_n})))}$ and $(-1)^{\beta}=(-1)^{m-k}\frac{\mbox{sign}(\mbox{det}(A({\C})))}{\mbox{sign}(\mbox{det}(A({\C_m})))}$. Now choose $r_{1}=\prod\limits_{i=2}^{k}a_{i,i}|\mbox{det}(A({\C}))|=M_{k-1}(A(P)[1,k+1|1])|s|$. Then from the equation \eqref{eq12}, we get that  
\begin{eqnarray*}
r_{k+1} &=&\prod\limits_{i=k+2}^{m}a_{i,i}|\mbox{det}(A({\C_n}))|=M_{m-k-1}(A({\C_m\setminus P})[1,m-k+1|1])|p|,\\  r_{m+1} &=& \prod\limits_{i=m+1}^{m+n-k-1}a_{i,i+1}|\mbox{det}(A({\C_m}))|=M_{n-k-1}(A({\C_n\setminus P})[1,n-k+1|1])|q|.
\end{eqnarray*}
Now substituting these values of $r_1,r_{k+1}, r_{m+1}$ the the equations \eqref{eq9},\eqref{eq11} we get that  
 \begin{eqnarray*}
 r_{i} &=& |s| M_{k-1}(A(P)[1,k+1|i]) ,\; \text{ for }\;2\le i\le k, \\ 
r_{k+i} &=& |p| M_{m-k-1}(A({\C_m\setminus P})[1,m-k+1|i]),\; \text{ for }\;2\le i\le m-k, \\  
r_{m+i} &=& |q| M_{n-k-1}(A({\C_n\setminus P})[1,n-k+1|i])  \; \text{ for }\;2\le i\le n-k. 
\end{eqnarray*}
Now $\frac{1}{d}\mbox{sign}(s){\bf x_{}}\in\mbox{Null}(A(D))$ is the required vector ${\bf{c_{m}pc_{n}}}$. By \cite[Theorem 5.1]{bklo}, we have $\dim_{\QQ}\mbox{Null}(A({D}))=1$ and by the Remark \ref{sec4rmk1}, we have $f_{\frac{1}{d}({\mbox{sign}(s)\bf x_{}})}$ is the primitive binomial and it generates $I_{D}$.
\end{proof}

\begin{example}
 Let $D$ be the weighted oriented graph as shown in the below figure, consisting of two unbalanced cycles ${\C_m}$, ${\C_n}$ share a path $P$ of length $2$ with $m=4, n=4$. Let $\textbf{w} = (1, 2, 3, 4, 5)$ be the weight vector of $D$. 
 
 \begin{figure}[h!] 
\centering \includegraphics[scale=0.5]{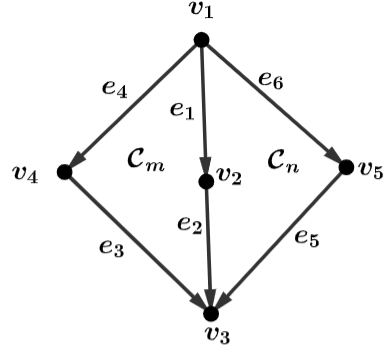}
\end{figure}

Then the incidence matrix of $D$ is  
$$A(D)= \begin{bmatrix}
   1 &0 &0 &1&0&1 \\
  2 &1 &0 &0&0&0 \\
  0 &3 &3 &0&3&0 \\ 
  0 &0 &1 &4&0&0 \\
  0 &0 &0 &0&1&5
 \end{bmatrix}.$$ 
By the notation of the Theorem \ref{sec4thm4}, we have $q=6, p=9, s=3, M_{1}(A(P)[1,3|1])=1, M_{1}(A(P)[1,3|2])=2, M_{1}(A({\C_m}\setminus P)[1,3|1])=4, M_{1}(A({\C_m}\setminus P)[1,3|2])=1, M_{1}(A({\C_n}\setminus P)[1,3|1])=5, M_{1}(A({\C_n}\setminus P)[1,3|2])=1$ and $d=3$. Then by the Theorem \ref{sec4thm4}, we have  ${\bf c_{m}pc_{n}}=\frac{1}{3}(3,-6,36,-9,-30,6)=(1,-2,12,-3,-10,2)$ and $I_{D}=(e_{1}e_{3}^{12}e_{6}^{2}-e_{2}^{2}e_{4}^{3}e_{5}^{10})$. Note that Macaulay2 \cite{gs}, gives the same above computed generator of $I_D$.    
\end{example}

\noindent 
We end the paper with the following question:

\begin{question}
    Can we characterize the primitive binomials of the toric ideal of any weighted oriented graph in terms of the combinatorial information of the graph ? 
\end{question}



\begin{thebibliography}{AAAA} 
\bibitem{bklo} J. Biermann, S. Kara, K-N. Lin and A.~B. O'Keefe, Toric ideals of weighted oriented graphs, Internat. J. Algebra Comput. {\bf 32} (2022), no.~2, 307--325.

\bibitem{cdss} G. Craciun, A. Dickenstein, A. Shiu and B. Sturmfels, Toric dynamical systems. J. Symbolic Comput. 44 (2009), no. 11, 1551--1565.

\bibitem{chks06} F. Catanese, S. Ho\c{s}ten, A. Khetan and B. Strumfel, The maximum likelihood degree, Amer. J. Math. {\bf 128} (2006), no.~3, 671--697.

\bibitem{cls} D. A. Cox, K-N. Lin and G. Sosa, Multi-Rees algebras and toric dynamical systems. Proc. Amer. Math. Soc. 147 (2019), no. 11, 4605--4616.

\bibitem{ds} P. Diaconis, B. Sturmfels, Algebraic algorithms for sampling from conditional distributions. Ann. Statist. 26 (1998), no. 1, 363--397.

\bibitem{gs} D.~R. Grayson and M.~E. Stillman, Macaulay2, a software system for research in algebraic geometry. Available at
\url{http://www.math.uiuc.edu/Macaulay2/}.

\bibitem{mpv17} J. Mart\'{\i}nez-Bernal, Y. Pitones\ and\ R.~H. Villarreal, Minimum distance functions of graded ideals and Reed-Muller-type codes, J. Pure Appl. Algebra {\bf 221} (2017), no.~2, 251--275.

\bibitem{nn22} R. Nandi, and R. Nanduri, {\em On regularity bounds and linear resolutions of toric algebras of graphs}, J. Commut. Algebra {\bf 14}(2022), no.~2, 285--296. 

\bibitem{oh99} H. Ohsugi\ and\ T. Hibi, Toric ideals generated by quadratic binomials, J. Algebra {\bf 218} (1999), no.~2, 509--527. 

\bibitem{ps14} S. Petrovi\'{c}\ and\ D. Stasi, Toric algebra of hypergraphs, J. Algebraic Combin. {\bf 39} (2014), no.~1, 187--208.

\bibitem{s95} B. Sturmfels, {\em {\it Gr\"{o}bner bases and convex polytopes}}, University Lecture Series, 8, Amer. Math. Soc., Providence, RI, (1995).

\bibitem{v01} R.~H. Villarreal, {\it Monomial algebras}, Monographs and Textbooks in Pure and Applied Mathematics, 238, Marcel Dekker, Inc., New York, 2001. 

\bibitem{v17} R.~H. Villarreal, Rees algebras of edge ideals, Comm. Algebra {\bf 23} (1995), no.~9, 3513--3524. 

\end{thebibliography}
\end{document}